\DeclareFontFamily{OT1}{rsfs}{}
\DeclareFontShape{OT1}{rsfs}{n}{it}{<-> rsfs10}{}
\DeclareMathAlphabet{\mathscr}{OT1}{rsfs}{n}{it}
\newtheorem{theorem}{Theorem}
\newtheorem{lemma}[theorem]{Lemma}
\newtheorem{remark}[theorem]{Remark}
\newenvironment{proof}{\noindent {\bf Proof:}}{$\Box$ \vspace{2 ex}}
\newcounter{nootje}
\def\Z{{\mathbb Z}}
\def\GOE{{\rm GOE}}
\def\R{{\mathbb R}}
\def\F{{\mathbb F}}
\def\Q{{\mathbb Q}}
\def\C{{\mathcal C}}
\def\Z{{\mathbb Z}}
\def\F{{\mathbb F}}
\def\Q{{\mathbb Q}}
\def\C{{\mathbb C}}
\newcommand{\D}{\mathrm{d}}
\title{What is the probability that a random integral quadratic form
  in $n$~variables has an integral zero?}
\author{M.~Bhargava, J.~E.~Cremona, T.~A.~Fisher, N.~G.~Jones,
  and J.~P.~Keating}
\begin{document}

\maketitle

\begin{abstract}
We show that the density of quadratic forms in $n$ variables over
$\Z_p$ that are isotropic is a rational function of $p$, where the
rational function is independent of $p$, and we determine this
rational function explicitly.  When real quadratic forms in $n$
variables are distributed according to the Gaussian Orthogonal
Ensemble (\GOE) of random matrix theory, we determine explicitly the probability that a random
such real quadratic form is isotropic (i.e., indefinite).

As a consequence, for each $n$, we determine an exact expression for
the probability that a random {integral} quadratic form in $n$
variables is isotropic (i.e., has a nontrivial zero over $\Z$), when
these integral quadratic forms are chosen according to the GOE distribution.  In particular, we find an exact expression for
the probability
that a random integral quaternary quadratic form is isotropic;
numerically, this probability of isotropy is approximately $98.3\%$.
\end{abstract}

%\tableofcontents

\section{Introduction}

An {integral quadratic form} $Q$ in $n$ variables is a homogeneous
quadratic polynomial
\begin{equation}\label{Qdef}
Q(x_1,x_2,\ldots,x_n) = \sum_{1\leq i\leq j\leq n} c_{ij}x_ix_j,
\end{equation}
where all coefficients $c_{ij}$ lie in $\Z$.  The quadratic form $Q$
is said to be {\it isotropic} if it represents 0, i.e., if there
exists a nonzero $n$-tuple $(k_1,\ldots,k_n)\in\Z^n$ such that
$Q(k_1,\ldots,k_n)=0$.  We wish to consider 
the question: what is the probability that a random integral
quadratic form in $n$ variables is isotropic?  

In this paper, we give a complete answer to this question for all $n$,
when integral quadratic forms in $n$ variables are chosen according to
the Gaussian Orthogonal Ensemble (\GOE) of random matrix theory
\cite{AGZ}.  In particular, in the most interesting case $n=4$, we
show that the probability that a random integral quaternary quadratic
form is isotropic is given by
\begin{equation}\label{quatprob}
\left(\frac12 + \frac{\sqrt{2}}{8}+\frac1\pi\right)  \prod_p\left(1 - \displaystyle\frac{p^3}
{4(p+1)^2(p^4+p^3+p^2+p+1)}\right)\approx 98.3\%.
\end{equation}

More precisely, let $D$ be a piecewise smooth rapidly decaying
function on the vector space $\R^{n(n+1)/2}$ of real quadratic forms
in $n$ variables (i.e., $D(x)$ and all its partial derivatives 
are $o(|x|^{-N})$ for all $N>0$), and assume that $\int_Q D(Q)dQ=1$; we call such a function
$D$ a {\it nice distribution} on the space of real $n$-ary quadratic
forms.  Then we define the probability, with respect to the
distribution $D$, that a random integral $n$-ary quadratic form $Q$
has a property~$P$ by
\begin{equation}\label{probdef2}
\lim_{X\to\infty} \frac{\sum_{Q \mbox{\scriptsize{ integral, with
        property $P$}}} D(Q/X)}
{\sum_{Q \mbox{ \scriptsize integral}} D(Q/X)},
\end{equation}
if the limit exists.  Let $\rho^D_n$ denote the probability with
respect to the distribution $D$ that a random integral quadratic form
in $n$ variables is isotropic.  If $D=\GOE$ is the distribution on the
space of $n\times n$ symmetric matrices given by
$\frac{1}{\sqrt{2}}(A+A^t)$, where each entry of the matrix $A$ is an
identical and independently distributed Gaussian---i.e., the Gaussian
Orthogonal Ensemble---then we use $\rho_n:=\rho^\GOE_n$ to denote the
probability, with respect to the \GOE\ distribution, that a random
$n$-ary quadratic form over~$\Z$ is isotropic.
%(We note that another commonly used but perhaps less natural distribution on
%real quadratic forms  is $D={\rm UNIFORM}$, where each coefficient of the quadratic form is independently
%distributed uniformly in the interval $[-1/2,1/2]$.)

We wish to explicitly determine the probability $\rho_n$ that a random
$n$-ary quadratic form over $\Z$, with respect to the
\GOE\ distribution, is isotropic, i.e., has a nontrivial zero over
$\Z$. To accomplish this, we first recall the Hasse--Minkowski
Theorem, which states that a quadratic form over $\Z$ is isotropic if
and only if it is isotropic over $\Z_p$ for all $p$ and over $\R$.
For any distribution $D$ as above, let $\rho_n^D(p)$ denote the
probability that a random integral quadratic form, with respect to the
distribution $D$, is isotropic over $\Z_p$, and let $\rho_n^D(\infty)$
denote the probability that it is isotropic over $\R$ (i.e., is
indefinite).  Then it is not hard to show (for the details, see
Section~\ref{localglobal}) that $\rho_n(p)=\rho_n^D(p)$ is independent
of $D$, and is simply given by the probability that a random $n$-ary
quadratic form over $\Z_p$, with respect to the usual additive measure
on $\Z_p^{n(n+1)/2}$, is isotropic over $\Z_p$.  Moreover, we will
also show in Section~\ref{localglobal} that the probability
$\rho_n^D(\infty)$ that a random \textit{integral} quadratic form is
isotropic over~$\R$ is equal to the probability that a random
\textit{real} quadratic form (with respect to the same distribution
$D$) is indefinite.

 For any distribution $D$ as above, it can be deduced from the work of
 Poonen and Voloch~\cite{PV}, together with the Hasse--Minkowski
 Theorem, that:
\begin{theorem}\label{productformula}
The probability $\rho_n^D$ that a random (with respect to the
distribution~$D$) integral quadratic form in $n$ variables is
isotropic is given by the product of the local probabilities:
\begin{equation}\label{pfeq}
\rho^D_n=\rho_n^D(\infty)\prod_p\rho_n(p).
\end{equation}
\end{theorem}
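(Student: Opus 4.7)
The plan is to combine the Hasse--Minkowski principle, which decomposes isotropy into local conditions, with a local-to-global density argument of Poonen--Voloch type~\cite{PV}. Since clearing denominators converts a nontrivial rational zero of a quadratic form into a nontrivial integral one, an integral form $Q$ is isotropic over $\Z$ iff isotropic over $\Q$; by Hasse--Minkowski this is equivalent to $Q$ being isotropic over $\R$ and over $\Q_p$ (equivalently, over $\Z_p$) for every prime $p$. Thus the probability $\rho_n^D$ is precisely the $D$-density of integral quadratic forms satisfying this infinite collection of independent local conditions.

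For any finite set $S$ of primes I would first establish the finite product formula
\[
\rho^D_{n,S}\;:=\;\text{prob. that }Q\text{ is isotropic at all }v\in S\cup\{\infty\}\;=\;\rho_n^D(\infty)\prod_{p\in S}\rho_n(p).
\]
The archimedean factor separates out because indefiniteness is a scale-invariant open condition on $\R^{n(n+1)/2}$, so its $D$-probability is preserved under the rescaling $Q\mapsto Q/X$ in \eqref{probdef2}. The non-archimedean factors separate out because, for each $p$, isotropy over $\Z_p$ is a measurable condition that can be approximated (up to a set of arbitrarily small $p$-adic measure) by a congruence condition modulo a power of $p$; the Chinese Remainder Theorem then decouples conditions at distinct primes in the large-$X$ limit. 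Since the global event of interest is contained in every such finite truncation, the upper bound $\rho_n^D \le \rho_n^D(\infty)\prod_p \rho_n(p)$ follows immediately.

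The matching lower bound is the main difficulty, and is where the Poonen--Voloch input is essential. One must show that the $D$-density of integral forms $Q$ with $|Q|\le X$ that are anisotropic at \emph{some} prime $p>Y$ tends to $0$ as $Y\to\infty$, uniformly in $X$. For $n\ge 2$, anisotropy of $Q$ over $\Z_p$ is confined (outside a smaller-codimension subvariety) to the discriminant locus $\{p\mid\disc(Q)\}$, whose $\Z_p$-density is $O(p^{-1})$, with stronger decay in $p$ as $n$ increases. The rapid decay of $D$ is what allows this uniform $p$-adic tail estimate to be transferred to the integral sums in \eqref{probdef2}, so that $\sum_{p>Y}(1-\rho_n(p))$ provides a convergent $o(1)$ bound on the discrepancy, closing the gap between the two bounds and yielding~\eqref{pfeq}.
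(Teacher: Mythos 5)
Your overall architecture (Hasse--Minkowski, finite CRT truncation for the upper bound, a tail estimate over large primes for the lower bound) is essentially the architecture of the Poonen--Voloch argument itself, whereas the paper does not re-run that argument: it cites \cite[Theorem 3.6]{PV} as a black box for box distributions ${\rm U}(\vec a,\vec b)$, disposes of $n\le 3$ separately by showing both sides of \eqref{pfeq} vanish, and then extends to a general nice $D$ by approximating $D$ in $L^1$ by finite weighted averages of box distributions and controlling all resulting error terms with a Riemann-sum lemma for rapidly decaying functions (Lemma~\ref{rdlemma}). The genuine gap in your write-up sits exactly where these two routes diverge: your lower bound requires that the $D$-weighted density of integral forms of height about $X$ that are anisotropic at \emph{some} prime $p>Y$ tends to $0$ uniformly in $X$. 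For primes $p$ comparable to or larger than $X$ one cannot argue via equidistribution modulo $p$, and this is precisely where the Ekedahl sieve enters Poonen--Voloch's proof --- as a lattice-point count over boxes. Your assertion that ``the rapid decay of $D$ is what allows this uniform $p$-adic tail estimate to be transferred to the integral sums'' is not a proof: rapid decay by itself does not convert a box-count sieve bound into a bound on the weighted sums $\sum_Q D(Q/X)$. You would either have to redo the sieve with weights $D(Q/X)$, or do what the paper does and approximate $D$ by box distributions; that approximation step is what your proposal omits.

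Two smaller points. First, the claimed convergent tail bound $\sum_{p>Y}(1-\rho_n(p))=o(1)$ is false for $n=2,3$, where $1-\rho_n(p)\gg p^{-1}$ and $\prod_p\rho_n(p)$ diverges to $0$; for those $n$ only your upper bound is needed (both sides of \eqref{pfeq} are then $0$), but this case distinction should be made explicit, as the paper does. Second, your finite product formula for $\rho^D_{n,S}$ and your claim that the archimedean probability ``is preserved under rescaling'' both implicitly equate limits of $D$-weighted lattice sums with integrals and with congruence densities; for a general nice $D$ this is again exactly Lemma~\ref{rdlemma}, and the paper notes that counterexamples exist when rapid decay fails, so this step cannot be treated as automatic.
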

See~Section~\ref{localglobal} for details.
Hence, to determine $\rho_n^D$, it suffices to determine  $\rho_n^D(\infty)$ and $\rho_n(p)$ for all $p$.

We treat first the probability $\rho_n(p)$ that a random $n$-ary
quadratic form over $\Z_p$ is isotropic.  Our~main result here is
that, for each $n$, the quantity $\rho_n(p)$ is given by a fixed
rational function in $p$ that is independent of~$p$ (this even
includes the case $p=2$), and we determine these rational functions
explicitly.  Specifically, we prove the following theorem:

\begin{theorem}\label{mainlocal} 
  Let $\rho_n(p)$ denote the probability that a quadratic form in
  $n$ variables over $\Z_p$ is isotropic. Then 
 % $\rho_1(p)=0$, $\rho_2(p) = 1/2$,
\[ \rho_1(p)=0, \quad \rho_2(p) = \frac12, \quad \rho_3(p) = 1 - \frac{p}{2(p+1)^2}, \quad  \rho_4(p) = 1 - \frac{p^3}
{4(p+1)^2(p^4+p^3+p^2+p+1)},\]
and $\rho_n(p) = 1$ for all $n \ge 5$.
\end{theorem}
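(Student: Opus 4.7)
The extreme cases are classical. When $n=1$, the form $Q(x)=cx^2$ is isotropic only if $c=0$, a measure-zero set, so $\rho_1(p)=0$. When $n\geq 5$, a standard local-field theorem asserts that every nondegenerate quadratic form over $\Q_p$ in at least five variables is isotropic; degenerate forms are isotropic via their radical, yielding $\rho_n(p)=1$.

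The substantive content is the computation for $n=2,3,4$. My plan is to decompose the space $V=\Z_p^{n(n+1)/2}$ according to the rank $r$ of $\bar Q := Q \bmod p$. Using an analogue of the Jordan decomposition over $\Z_p$, after a measure-preserving $\GL_n(\Z_p)$ change of variables any $Q\in V$ may be written as
\[
Q(x_1,\dots,x_n) = Q_r(x_1,\dots,x_r) + p\, Q'(x_{r+1},\dots,x_n),
\]
where $Q_r$ is a nondegenerate-mod-$p$ form in $r$ variables and $Q'$ is a form in the remaining $n-r$ variables. Under the Haar measure, conditional on the pair $(r,Q_r)$ the form $Q'$ is uniformly distributed on the space of $(n-r)$-ary quadratic forms over $\Z_p$, and the probabilities for the rank $r$ and for the isotropy of $\bar Q_r$ over $\F_p$ follow from standard counts of symmetric matrices over $\F_p$ by rank and discriminant class.

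If $\bar Q_r$ is isotropic over $\F_p$---which is automatic for $r\geq 3$ by Chevalley--Warning---then any nonsingular $\F_p$-zero of $Q_r$ lifts by Hensel's lemma to a $\Z_p$-zero of $Q$, so $Q$ is isotropic. The remaining cases are $r\leq 2$ with $\bar Q_r$ anisotropic. In those, the substitution $x_i = pz_i$ for $i\leq r$ recasts the isotropy question as the isotropy of the rescaled form $p\,Q_r(z) + Q'(x_{r+1},\dots,x_n)$, to which the same rank-decomposition analysis applies. Since $Q'$ remains uniformly distributed and independent of $Q_r$ at each stage, one obtains a closed system of linear equations for the isotropy densities at the various levels; summing the resulting geometric series yields the claimed rational functions in $p$.

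The principal obstacle will be the case $p=2$: the normal form over $\Z_2$ requires binary hyperbolic and anisotropic blocks in addition to the diagonal terms available for odd $p$, and Hensel's lemma demands extra $2$-adic precision before a nonsingular residual zero lifts. I expect to carry out the recursion separately for odd $p$ and for $p=2$, then verify directly that the two answers coincide with the single rational function stated in the theorem.
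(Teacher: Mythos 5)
Your overall strategy---classify $Q$ by the anisotropic part of its reduction mod $p$, scale those variables by $p$, and recurse---is the same as the paper's, but as written the recursion does not close. The gap is in the sentence ``Since $Q'$ remains uniformly distributed and independent of $Q_r$ at each stage, one obtains a closed system of linear equations for the isotropy densities.'' After one rescaling you are analyzing $R = p\,Q_r(z) + Q'(x')$; if $\bar{Q'}$ is again anisotropic of rank $r'\le 2$ (or identically zero) you rescale and divide by $p$ once more, arriving at a form such as $Q_r(z) + p\,Q'_{r'}(w) + Q''(x'')$. Its reduction is $\bar Q_r \perp \bar{Q''}$: the fixed anisotropic block $\bar Q_r$ has re-entered at unit level, so the classification probabilities for the next step must be computed \emph{conditionally} on the presence of that block, not as the unconditional densities for forms in fewer variables. (For instance, if $r=2$ and $\bar{Q''}$ also has an anisotropic rank-$2$ part, the joint reduction is a nondegenerate quaternary form over $\F_p$ and hence isotropic with a nonsingular zero---an event your bookkeeping, which looks only at $\bar{Q''}$, would misclassify.) This is exactly why the paper introduces the ``point'' and ``line'' conditions and the conditional densities $\eta_j^{(n)},\nu_j^{(n)}$ alongside the unconditional $\xi_j^{(n)}$, and why its recursion runs over the enlarged state space of quantities $\alpha_j^{(n)},\beta_j^{(n)},\gamma_j^{(n)}$ rather than over the $\rho_m(p)$ alone. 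A linear system written purely in isotropy densities of uniformly random forms in fewer variables does not reproduce the stated rational functions; you need to enlarge your set of unknowns accordingly.

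Two smaller points. First, the claim that, conditional on $(r,Q_r)$, the complementary block $Q'$ is exactly Haar-uniform and independent requires an argument: the $\GL_n(\Z_p)$ change of variables effecting the splitting depends on $Q$ itself. The paper sidesteps this by never isolating $Q'$ as an independent random object; it instead computes the measures of explicitly described sets of coefficient valuation patterns. Second, for $p=2$ the relevant notion of ``rank'' must be the one that recognizes, e.g., $x^2+y^2+z^2=(x+y+z)^2$ over $\F_2$ as the square of a linear form; the paper's classification by factorization of $\bar Q$ (two conjugate linear factors over $\F_{p^2}$, or a scalar times the square of a linear form) handles $p=2$ with no case split, whereas the Jordan-splitting route you propose genuinely requires the extra binary blocks and a separate verification, as you anticipate.
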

Our method of proof for Theorem~\ref{mainlocal} is uniform in $n$, and
relies on establishing certain recursive formulae for densities of
local solubility for certain subsets of $n$-ary quadratic forms
defined by their behavior modulo powers of~$p$. In particular, we
obtain a new recursive proof of the well-known fact that every $n$-ary
quadratic form over $\Q_p$ is isotropic when $n\geq 5$.  See
Section~\ref{sec:Zp} for details.

% We summarize in Table~\ref{table:Zp} the values of $\rho_n(p)$ for all $n$.
% \begin{table}\renewcommand{\arraystretch}{1.35}
% $$\begin{array}{|c|c|}\hline
% n &\rho_n(p) \\ \hline\hline 
% 1  & 0 \\ \hline
% 2  & 1/2 \\ \hline
% 3  & {1-\frac{p}{2(p+1)^2}}  \\ \hline
% 4  &  1-\frac{p^3}{4(p+1)^2(p^4+p^3+p^2+p+1)}  \\ \hline
% 5  & 1 \\ \hline
% \end{array}
% $$
% \caption{Probability that a random quadratic form in $n$ variables over $\Z_p$ is isotropic.}
% \label{table:Zp}
% \end{table}

We turn next to the probability $\rho_n(\infty)=\rho_n^\GOE(\infty)$
that a real $n$-ary quadratic form is isotropic.  Closed form
expressions for $\rho_n(\infty)$ for $n\le3$
%% \[ \rho^\GOE_1(\infty)=0, \qquad
%%    \rho^\GOE_2(\infty)=\frac{\sqrt2}{2}, \qquad
%%    \rho^\GOE_3(\infty)=\frac{\pi+2\sqrt2}{2\pi}, \]
were first given by Beltran in \cite[(7)]{DM2}, while in \cite{DM} it was argued that
$1-\rho_n(\infty)$ decays like $e^{-n^2(\log 3)/4}$ as $n\rightarrow\infty$.

In Section~\ref{sec:R}, we show how to obtain an exact formula for
$\rho_n(\infty)$ for any given $n$.  More precisely, using the de
Bruijn identity~\cite{debruijn} for calculating certain determinantal
integrals, we express $\rho_n(\infty)$ as the Pfaffian of an explicit
$n'\times n'$ matrix, where $n':=2\lceil n/2 \rceil$, whose entries
are given in terms of values of the gamma and incomplete beta
functions at integers and half-integers.  Indeed, let $\Gamma$ denote
the usual gamma function $\Gamma(s)=\int_{0}^\infty x^{s-1}e^{-x} \D
x$, and $\beta_{t}$ the usual incomplete beta function $\beta_{t}(i,j)
= \int_0^{t} x^{i-1}(1-x)^{j-1} \D x.$ \label{sec:beta} Then we have
the following theorem giving expressions for $\rho_n(\infty)$:

\begin{theorem}\label{mainreal}
Let $n\geq 1$ be any integer, and define $n':=2 \lceil n/2 \rceil$.
When real $n$-ary quadratic forms are chosen according to the
$n$-dimensional Gaussian Orthogonal Ensemble, the probability of
isotropy over $\R$ is given by
\begin{equation}\rho_{n}(\infty)=1-\frac{{\rm
      Pf}(A)}{2^{(n-1)(n+4)/4}\prod_{m=1}^n\Gamma(\frac m
    2)},\label{theorem3}
\end{equation}
where $A$ is the $n'\times n'$ skew-symmetric matrix whose
$(i,j)$-entry $a_{ij}$ is given for $i<j$ by
%\begin{align}\label{pfaffianformula}
%a_{ij} = \begin{cases}
%2^{i+j-2}\Gamma(\frac{i+j}{2})\left(\beta_{\frac{1}{2}}(\frac{i}{2},\frac{j}{2})- \beta_{\frac{1}{2}}(\frac{j}{2},\frac{i}{2})\right) & \mbox{for}~ i, j \in \{1,\dots,n\}, \\[.075in]
%2^{i-1}\Gamma(\frac{i}{2})  & \mbox{for}~ j = n+1, \,i \in \{1,\dots,n\}, \\[.075in]
%-2^{j-1}\Gamma(\frac{j}{2})  & \mbox{for}~ i = n+1, \,j \in \{1,\dots,n\}, \\[.075in]
%0 &\mbox{for}~ i= j = n+1.
%\end{cases} 
\begin{align}\label{pfaffianformula}
a_{ij} = \begin{cases}
\,2^{i+j-2}\Gamma(\frac{i+j}{2})\left(\beta_{\frac{1}{2}}(\frac{i}{2},\frac{j}{2})- \beta_{\frac{1}{2}}(\frac{j}{2},\frac{i}{2})\right) & \mbox{if }~ i<j\leq n, \\[.075in]
\,2^{i-1}\Gamma(\frac{i}{2})  & \mbox{if }~ i<j=n+1.\\[.075in]
\end{cases} 
\end{align}
$($Note that the second case in $(\ref{pfaffianformula})$ arises only when $n$ is odd.$)$
\end{theorem}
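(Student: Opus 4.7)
The plan is to reduce $\rho_n(\infty)$ to an integral against the joint eigenvalue density of the GOE, convert that integral into a Pfaffian via de~Bruijn's identity, and then evaluate the resulting matrix entries in closed form. A real symmetric matrix represents an isotropic (i.e.\ indefinite) quadratic form precisely when its eigenvalues are not all of the same sign; since the GOE distribution is invariant under $A\mapsto -A$, the probability that all eigenvalues are negative equals the probability that all are positive, so
\[
\rho_n(\infty) \;=\; 1 - 2\,\Pr(\text{all eigenvalues positive}).
\]
The joint eigenvalue density of the GOE is $\frac{1}{Z_n}\prod_{i<j}|\lambda_i-\lambda_j|\prod_i e^{-\lambda_i^2/4}$, where the Selberg/Mehta evaluation gives $Z_n=n!\cdot 2^{n(n+3)/4}\prod_{m=1}^n\Gamma(m/2)$. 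Combining this with the identity $\prod_{i<j}|\lambda_i-\lambda_j|=\det(\lambda_i^{j-1})$ on the ordered simplex reduces the problem to evaluating
\[
\int_{0<\lambda_1<\cdots<\lambda_n}\det(\lambda_i^{j-1})\prod_{i=1}^n e^{-\lambda_i^2/4}\,d\lambda_1\cdots d\lambda_n.
\]

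Next, I will apply de~Bruijn's identity~\cite{debruijn}, which converts any integral of the form $\int_{x_1<\cdots<x_n}\det[\phi_j(x_i)]\,dx$ into a Pfaffian whose entries are pairwise integrals of the $\phi_j$. Taking $\phi_j(x)=x^{j-1}e^{-x^2/4}$ on $(0,\infty)$ produces, for $n$ even, an $n\times n$ antisymmetric matrix with entries
\[
a_{ij}\;=\;\int_0^\infty\!\!\int_0^\infty\operatorname{sgn}(y-x)\,x^{i-1}y^{j-1}\,e^{-(x^2+y^2)/4}\,dx\,dy\qquad(i<j);
\]
for $n$ odd, the identity yields an $(n+1)\times(n+1)$ matrix obtained by appending a further row and column whose entries are the single integrals $\int_0^\infty x^{i-1}e^{-x^2/4}\,dx=2^{i-1}\Gamma(i/2)$. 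This immediately produces the second case of (\ref{pfaffianformula}) and accounts for the appearance of $n'=2\lceil n/2\rceil$.

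To put $a_{ij}$ with $i<j\le n$ into the stated form, I will substitute $u=x^2/4$ and $v=y^2/4$, which extracts a factor of $2^{i+j-2}$, and then change variables to $s=u+v$ and $t=u/(u+v)$ (with Jacobian $s$). The integral factorises: the $s$-integral equals $\Gamma((i+j)/2)$, while the $t$-integral becomes $\int_0^1\operatorname{sgn}(1-2t)\,t^{i/2-1}(1-t)^{j/2-1}\,dt$. Splitting the latter at $t=1/2$ and applying $t\mapsto 1-t$ to the portion on $(1/2,1)$ produces exactly $\beta_{1/2}(i/2,j/2)-\beta_{1/2}(j/2,i/2)$, giving the first case of (\ref{pfaffianformula}).

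The most delicate step I expect is matching the constants: combining the factor of $2$ from the $A\mapsto -A$ symmetry, the $n!$ from removing the ordering constraint (which cancels the $n!$ in $Z_n$), and the Selberg value of $Z_n$, one should obtain exactly the denominator $2^{(n-1)(n+4)/4}\prod_{m=1}^n\Gamma(m/2)$ in (\ref{theorem3}); care is also needed to treat the two parities of $n$ uniformly, since for odd $n$ the Pfaffian is of size $n+1$ while the denominator still involves only $n$ gamma factors.
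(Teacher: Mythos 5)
Your proposal is correct and follows essentially the same route as the paper: writing $\rho_n(\infty)=1-2p_n^+$, expressing $p_n^+$ as an ordered determinantal integral against the GOE eigenvalue density, applying de Bruijn's identity, and evaluating the entries via $u=x^2/4$ and a beta-type substitution to get $2^{i+j-2}\Gamma(\tfrac{i+j}{2})(\beta_{1/2}(\tfrac i2,\tfrac j2)-\beta_{1/2}(\tfrac j2,\tfrac i2))$, with the constant $2^{(n-1)(n+4)/4}$ emerging exactly as you describe from $Z_n=n!\,2^{n(n+3)/4}\prod_{m=1}^n\Gamma(m/2)$ and the factor of $2$.
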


Theorem~\ref{mainreal} allows one to calculate $\rho_n(\infty)$
exactly in closed form for any given~$n$.  In particular, it follows
%(see Proposition~\ref{prop:rho-form} below) 
from the Pfaffian
representation in Theorem~\ref{mainreal} that $\rho_n(\infty)$ is a
polynomial in $\pi^{-1}$ of degree at most $\lfloor\frac{n+1}4\rfloor$
with coefficients in $\Q(\sqrt2)$ (see~Remark~\ref{degrmk}).  In Table~\ref{table:GOE-results},
we give the resulting formulae for $\rho_n(\infty)$ for all $n\le8$, and
also provide numerical approximations.  (For any $n>8$, we have
$\rho_n(\infty)\approx 1$ to more than 10 decimal places!)

\begin{table}[ht]
\renewcommand{\arraystretch}{1.2}
$$
 \begin{array}{|r||c|c|}\hline n & \rho_{n}(\infty)= & \rho_{n}(\infty)\approx \\\hline\hline 1 & 0 & 0 \\\hline 2 & {\sqrt{2}}/2 & 0.7071067811 \\\hline 3 & {1}/{2} + {\sqrt{2}}\,{\pi^{-1}} & 0.9501581580 \\\hline 4 &  1/2 + \sqrt{2}/8 + {\pi^{-1}} & 0.9950865814 \\\hline 5 & 3/4 +(2/3+\sqrt2/12)\pi^{-1}  & 0.9997197706  \\\hline 6 & 3/4+7\sqrt2/64+(37/48-\sqrt2/3)\pi^{-1}&0.9999907596 \\\hline 7 & 7/8 + (47/120+109\sqrt2/480)\pi^{-1}-(32\sqrt2/45)\pi^{-2} & 0.9999998239 \\\hline 8 &
7/8+9\sqrt2/256+(2377/3840-53\sqrt2/480)\pi^{-1}-(32/45)\pi^{-2}  & 0.9999999980\\\hline 
\end{array}
$$
 \caption{Probability $\rho_{n}(\infty)$ that a random $n$-ary quadratic form over $\R$ from the GOE distribution is isotropic, for $n\leq 8$.}
   \label{table:GOE-results}
\end{table}

Combining Theorems~\ref{productformula}, \ref{mainlocal}, and \ref{mainreal}, we finally obtain the following theorem giving the
probability $\rho_n$ that a random integral quadratic form in $n$
variables has an integral zero:
%% Recall that a distribution is \textit{nice}
%% if it is piecewise smooth and rapidly decaying.

\begin{theorem}\label{main}
Let $D$ be any nice (i.e., piecewise smooth and rapidly decaying)
distribution.  Then the probability $\rho_n^D$ that a random integral
quadratic form in $n$ variables with respect to the distribution $D$
is isotropic is given by
$$\rho_n^D = \left\{\begin{array}{cl}
0 & \mbox{if $n\leq 3$}; \\[.125in]
\rho_4^D(\infty)\displaystyle\prod_p\left(1 - \displaystyle\frac{p^3}
{4(p+1)^2(p^4+p^3+p^2+p+1)}\right) & \mbox{if $n=4$}; \\[.25in]
\rho_n^D(\infty) & \mbox{if $n\geq 5$}.
\end{array}\right.$$
If $D={\GOE}$ is the \GOE\ distribution, then the quantities
$\rho_n(\infty)=\rho_n^D(\infty)$ are as given in
Theorem~\ref{mainreal}.
\end{theorem}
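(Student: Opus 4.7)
The plan is that Theorem~\ref{main} follows by direct substitution from the three preceding results. By Theorem~\ref{productformula}, for any nice distribution $D$ we have
\[
\rho_n^D \;=\; \rho_n^D(\infty)\prod_p \rho_n(p),
\]
so the only remaining work is (i) to evaluate the infinite Euler product of local densities using the closed forms of Theorem~\ref{mainlocal}, and (ii) for the GOE case, to identify the archimedean factor with the Pfaffian expression of Theorem~\ref{mainreal}.

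For the local product I would proceed by cases on $n$. For $n=1$ one has $\rho_1(p)=0$, and $\rho_1^D=0$ trivially. For $n=2$, every local factor equals $\tfrac12$, so $\prod_p \rho_2(p)=0$ as an infinite product of a constant less than $1$. For $n=3$, expanding the local factor gives
\[
1-\frac{p}{2(p+1)^2} \;=\; 1-\frac{1}{2p}+O(p^{-2}),
\]
and since $\sum_p \tfrac{1}{p}$ diverges, the Euler product also diverges to $0$. Because $\rho_n^D(\infty)\le 1$, this forces $\rho_n^D=0$ for $n\le 3$. For $n\ge 5$, every local factor is $1$, so the product collapses to $\rho_n^D=\rho_n^D(\infty)$. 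The genuinely interesting case is $n=4$: here the local factor has the form $1-O(p^{-3})$, so the product converges absolutely to a positive real number, and the identity
\[
\rho_4^D \;=\; \rho_4^D(\infty)\prod_p\left(1 - \frac{p^3}{4(p+1)^2(p^4+p^3+p^2+p+1)}\right)
\]
is immediate. Note that one also needs that each individual factor is nonzero, which is clear from inspection.

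For the final sentence of the theorem, I would simply note that when $D=\GOE$, the archimedean factor $\rho_n^D(\infty)$ is by definition the quantity $\rho_n(\infty)$, which Theorem~\ref{mainreal} evaluates in closed form as a Pfaffian.

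There is no substantive obstacle here: the theorem is essentially an assembly of the three previously stated results, and the only analytic content is the elementary convergence analysis of the local Euler products in the borderline cases $n=3$ (divergent to $0$) and $n=4$ (absolutely convergent to a positive constant).
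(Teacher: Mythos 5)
Your proposal is correct and matches the paper's approach exactly: Theorem~\ref{main} is obtained simply by combining Theorems~\ref{productformula}, \ref{mainlocal}, and \ref{mainreal}, with the only added content being the (correct) elementary observation that the local Euler product vanishes for $n\le 3$, converges absolutely to a positive constant for $n=4$, and is identically $1$ for $n\ge 5$.
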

In particular, when $D=\GOE$, we have $\rho_n=0$ for $n=1$, 2, and 3,
while for $n=4$ we obtain the expression (\ref{quatprob}) for
$\rho_4$.  For $n\geq 5$, we have $\rho_n=\rho_n(\infty)$, and so the
values of $\rho_n$ are as given by Theorem~$\ref{mainreal}$.
Theorem~\ref{main} shows that $n=4$ is in a sense the most interesting
case, as all places play a nontrivial role in the final answer.

It is also interesting to compare how the probabilities change if
instead of the \GOE\ we use the uniform distribution U on quadratic
forms, where each coefficient of the quadratic form is chosen
uniformly in the interval $[-1/2,1/2]$.  While the quantities
$\rho_n^{\rm U}(\infty)$ can easily be expressed as explicit definite
integrals, it seems unlikely that they can be evaluated in compact and
closed form for general $n$ in this case.  Using numerical
integration, or a Monte Carlo approximation, 
%%with $10^{10}$ trials
we can compute $\rho_n^{\rm U}(\infty)\approx 0,$ $0.627$, $0.901$,
$0.982$, $0.998$, and $>0.999$ for $n=1$, 2, 3, 4, 5, and 6,
respectively.  It is known (see, e.g., \cite[Theorem.~2.3.5]{AGZ}) that
$1-\rho_n^{\rm U}(\infty)$ decays faster than $e^{-cn}$ for some
constant $c>0$; the actual rate of decay is likely even faster.

In particular, we have $\rho_4^{\rm U}=\rho_4^{\rm
  U}(\infty)\prod_p\rho_4(p)\approx 97.0\%$, which is slightly smaller
than the \GOE\ probability $\rho_4^\GOE\approx 98.3\%$.
%
%
%Closed form expressions for $\rho^\GOE_n(\infty)$ for $n\le3$
%%% \[ \rho^\GOE_1(\infty)=0, \qquad
%%%    \rho^\GOE_2(\infty)=\frac{\sqrt2}{2}, \qquad
%%%    \rho^\GOE_3(\infty)=\frac{\pi+2\sqrt2}{2\pi}, \]
%were given in \cite[(7)]{DM2}, while in \cite{DM} it is argued that
%$1-\rho^\GOE_n(\infty)$ decays like $e^{-n^2(\log 3)/4}$ as $n\rightarrow\infty$ (although based on analogous formulae in the case of random unitary matrices, we expect there to be an additional factor $n^{-1/4}$).  In the
%final section of the paper we show how to evaluate
%$\rho^\GOE_n(\infty)$ exactly in closed form for all~$n$.  In
%Table~\ref{table:GOE-results} we give the resulting formulas for
%$n\le8$, together with numerical approximations.
%
%It is natural to expect similar behavior for $\rho_n(\infty)$,
%although this seems to be an open question.
%
%Numerically, we have $\rho_n^\GOE(\infty)=0$, $0.707$, $0.950$, $0.995$,
%and $>0.999$ for $n=1$, 2, 3, 4, and 5, respectively.
We summarize the values of $\rho^D_n$, and provide numerical values in
the cases of the uniform and \GOE\ distributions, in
Table~\ref{table:distributions}.

\begin{table}
\renewcommand{\arraystretch}{1.5}
$$\begin{array}{|r||c|c|c|}\hline
n & \rho^D_n &\rho_n^{\rm U} & \rho_n \\ \hline\hline
1 & 0 & 0 & 0 \\ \hline
2 & 0 & 0 & 0 \\ \hline
3 & 0 & 0 & 0 \\ \hline
4 & \,\rho_4^D(\infty)\prod_p\Bigl(1 - \frac{p^3}
{4(p+1)^2(p^4+p^3+p^2+p+1)}\Bigr)\, & \approx 97.0\% & \approx 98.3\%  \\ \hline
5 & \rho_5^D(\infty) & \approx 99.8\% & >99.9\%  \\ \hline\hline
\geq6 & \rho_n^D(\infty) & >99.9\% & >99.9\%  \\ \hline
\end{array}
$$
\vspace{-.235in}
\begin{center}
\caption{Probability that a random integral quadratic form in $n$
  variables is isotropic, for a general distribution $D$, for 
the uniform distribution, and for the \GOE\ distribution.}
\label{table:distributions}
\end{center}
\vspace{-.2in}
\end{table}

\goodbreak

This paper is organized as follows.  In Section~\ref{localglobal}, we
prove the product formula in Theorem~\ref{productformula}.  The
theorem is known in the case of the uniform distribution ${\rm U}$ (or
indeed any uniform distribution supported on a box) for any $n\geq 4$
by the work of Poonen and Voloch~\cite{PV}, which in turn depends on
the Ekedahl sieve~\cite{Ek}.  To complete the proof of
Theorem~\ref{productformula}, we first prove directly that both sides
of~(\ref{pfeq}) are equal to 0 for $n\leq 3$.  For $n\geq 4$, we 
prove that (\ref{pfeq}) is true for a general nice distribution $D$ by
approximating $D$ by a finite weighted average of uniform box
distributions, where the result is already known.  The condition that
$D$ is rapidly decreasing (as in the case of $D=\GOE$) plays a key
role in the proof; indeed, we show how counterexamples to (\ref{pfeq})
can be constructed when this condition does not hold.

In Section~\ref{sec:Zp}, we then prove Theorem~\ref{mainlocal}, i.e., we determine for each $n$ the exact $p$-adic density of $n$-ary quadratic forms over $\Z_p$ that are isotropic.  The outline of the proof is as follows.  First, we note that a quadratic form in $n$ variables defined over~$\Z_p$
can be anisotropic only if its reduction modulo~$p$ has either two
conjugate linear factors over $\F_{p^2}$ or a repeated
linear factor over~$\F_p$.  We first compute the probability of each of these
cases occurring, which is elementary.  We then determine the
probabilities of isotropy in each of these two cases by developing certain recursive
formulae for these probabilities, in terms of other suitable quantities,
which allow us to solve and obtain exact algebraic 
expressions for these probabilities for each value of $n$.
We note that our general argument shows 
in particular 
%not only that $\rho_n(p)=1$ for all $n\geq 5$, but in fact
that quadratic forms 
in~$n\ge5$ variables over~$\Q_p$ are always isotropic, thus yielding a 
new recursive proof of this well-known fact.

Finally, we prove Theorem~\ref{mainreal} in Section~\ref{sec:R}, i.e.,
we determine for each $n$ the probability that a random real $n$-ary quadratic form
from the \GOE\ distribution is indefinite.  We accomplish this by
first expressing, as a certain determinantal integral, the probability
that an $n\times n$ symmetric matrix from the \GOE\ distribution has
all positive eigenvalues.  We then show how this determinantal
integral can be evaluated using the de Bruijn
identity~\cite{debruijn}, allowing us to obtain an expression for the
probability of positive definiteness in terms of the Pfaffian of an
explicit skew-symmetric matrix $A$, as given in
Theorem~\ref{mainreal}.  We note that the values of these
probabilities were known previously for $n\leq 3$
(cf.\ \cite[(7)]{DM2}).

We end this introduction by remarking that the analogues of Theorems~\ref{mainlocal} and \ref{main} also hold over a
general local or global field, respectively.  Here, we define global
densities of quadrics as in \cite[\S4]{PV}; more general densities
with respect to ``nice distributions'' could also be defined in an
analogous manner. 
%:  namely, . . . {\bf (Manjul to insert)}. \\
%they should be rapidly decaying at all infinite places.  {\bf (Is
%there a standard definition of choosing quadratic forms from
%\GOE\ over a number field?)}
Indeed, the analogue of
Theorem~\ref{productformula} holds (with the identical proof), where
the product on the right hand side of (\ref{pfeq}) should be taken
over all finite and infinite places of the number field (the densities at
the complex places are all equal to 1, since all quadratic forms over $\C$ are isotropic).  
Theorem~\ref{mainlocal} also holds over any finite extension of $\Q_p$, 
with the same proof, provided that when making substitutions in the
proofs we replace $p$ by a uniformiser, and when computing
probabilities we replace $p$ by the order of the residue field.

\pagebreak

\section{The local product formula: Proof of Theorem~\ref{productformula}}\label{localglobal}

Let $D$ be any nice (piecewise smooth and rapidly decaying)
distribution.  Our aim in this section is to prove the following three
assertions from the introduction:

\begin{itemize}
\item[ (a)] $\rho_n^D(p)$ is equal to the probability $\rho_n(p)$ that
  a random $n$-ary quadratic form over $\Z_p$, with respect to the
  usual additive measure on $\Z_p^{n(n+1)/2}$, is isotropic over
  $\Z_p$;
\item[(b)] $\rho_n^D(\infty)$ is equal to the probability that a
  random $n$-ary quadratic form over $\R$, with respect to the
  distribution $D$, is indefinite; and
\item[(c)] $\rho_n^D=\rho_n^D(\infty)\prod_p \rho_n(p)$ (i.e.,
  Theorem~\ref{productformula} holds).
\end{itemize}

Items (a) and (b) are trivial in the case that $D={\rm U}$ is the uniform distribution, or more generally when $D$ is any distribution ${\rm U}(\vec a,\vec b)$ that is constant on a box $[\vec a,\vec b]:=[a_1,b_1]\times\cdots\times[a_{n(n+1)/2},b_{n(n+1)/2}]$ and 0 outside this box; here $\vec a=(a_1,\ldots,a_{n(n+1)/2})$ and $\vec b=(b_1,\ldots,b_{n(n+1)/2})$ are vectors in $\R^{n(n+1)/2}$ such that $a_i<b_i$ for all $i$.  

Meanwhile, Theorem~\ref{productformula} for $n\geq4$, in the case that $D$ is the uniform distribution ${\rm U}$, follows from the work of Poonen and Voloch~\cite[Theorem~3.6]{PV} (which establishes the product formula for the probability that an integral quadratic form  with respect to the distribution $D$ is locally soluble), together with the Hasse--Minkowski Theorem (which states that a quadratic form is isotropic if and only if it is locally soluble). In fact, the proof of \cite[Theorem~3.6]{PV} (which in turn relies on the Ekedahl's sieve~\cite{Ek}) immediately adapts to the case where $D={\rm U}(\vec a,\vec b)$ without essential change.  

To show that Theorem~\ref{productformula} holds also when $D={\rm U}(\vec a,\vec b)$ and $n\leq 3$, it suffices to prove that in this case both sides of (\ref{pfeq}) are equal to 0.  To see this, we may use Theorem~\ref{mainlocal}, which does not rely on the results of this section, and which states that the probability that a random $n$-ary quadratic form over $\Z_p$ is isotropic is equal to $\rho_n(p)=0$, $1/2$, or $1-p/(2(p+1)^2)$ for $n=1$, 2, or 3, respectively.  This immediately implies that the right hand side of (\ref{pfeq}) is zero.  To see that the left hand side of (\ref{pfeq}) is zero, we note that if a quadratic form over $\Z$ is isotropic, then it must be isotropic over $\Z_p$ for all $p$ (the easy direction of the Hasse--Minkowski Theorem).  By the Chinese Remainder Theorem, the (limsup of the) probability $\rho_n^D$ that a random integral $n$-ary  
quadratic form is isotropic with respect to the distribution $D={\rm U}(\vec a,\vec b)$ is at most
$$ \prod_{p<Y} \rho_n(p)$$
for any $Y>0$. Letting $Y$ now tend to infinity shows that $\rho_n^D=0$ for $n=1$, 2, or 3, i.e., the left hand side of (\ref{pfeq}) is also zero. 

Thus we have established items (a)--(c), for all $n$, in the case that $D={\rm U}(\vec a,\vec b)$ is a constant distribution supported on a box $[\vec a,\vec b]$.  Clearly (a)--(c) then must hold also for any finite weighted average of such box distributions ${\rm U}(\cdot,\cdot)$.

To show that (a)--(c) hold for general nice distributions $D$, we make use of the following elementary lemma regarding integration of rapidly decaying functions.

\begin{lemma}\label{rdlemma}
Let $f$ be any piecewise smooth rapidly decaying function on $\R^m$.  Then 
\begin{equation}\label{rs}
\int f(y)\D y =\lim_{X\to\infty} \frac1{X^m}\sum_{y\in\Z^m}f(y/X).
\end{equation}
\end{lemma}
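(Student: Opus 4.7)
The right-hand side of (\ref{rs}) is the Riemann sum for $f$ associated to the partition of $\R^m$ into half-open cubes $C_y=\prod_{i=1}^m[y_i/X,(y_i+1)/X)$ of volume $X^{-m}$, with sample point at the lower corner $y/X$. My plan is to establish convergence of this Riemann sum by the standard truncate-and-estimate argument: rapid decay of $f$ will control the tails uniformly in $X$, while piecewise smoothness gives Riemann integrability on any compact set.

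Fix $\epsilon>0$. Since $f$ is rapidly decaying, there is a constant $C$ with $|f(y)|\leq C(1+|y|)^{-(m+1)}$ for all $y\in\R^m$. I would first choose $R>0$ so large that $\int_{|y|>R}|f(y)|\,\D y<\epsilon/3$ and, simultaneously, that
$$\frac{1}{X^m}\sum_{y\in\Z^m,\,|y|>RX}|f(y/X)|\;<\;\epsilon/3\qquad\text{for every }X\geq 1.$$
The uniform bound on the latter comes from comparing the sum to the integral of the monotone majorant $z\mapsto C(1+|z|)^{-(m+1)}$ over $\{|z|>R-\sqrt m/X\}$, a routine Riemann-sum-versus-integral estimate that only loses an additive boundary term absorbed by taking $R$ large.

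Next, on the closed ball $B_R=\{|y|\leq R\}$, piecewise smoothness of $f$ makes it Riemann integrable, so the ordinary Riemann sum over $(1/X)\Z^m\cap B_R$ converges to $\int_{B_R}f(y)\,\D y$ as $X\to\infty$. In particular, for all sufficiently large $X$ the truncated Riemann sum differs from the truncated integral by less than $\epsilon/3$. Combining the three estimates yields
$$\left|\int_{\R^m}f(y)\,\D y\;-\;\frac{1}{X^m}\sum_{y\in\Z^m}f(y/X)\right|\;<\;\epsilon$$
for all $X$ sufficiently large, which proves the lemma since $\epsilon>0$ was arbitrary.

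The main technical obstacle is the uniform tail bound: $X^{-m}\sum_{|y|>RX}|f(y/X)|$ must be small uniformly in $X$, not just for each fixed $X$. Rapid decay is essential here; indeed the introduction alludes to counterexamples to the product formula when the distribution is not rapidly decaying, and those counterexamples are essentially obstructions at this step. The comparison-to-integral trick works cleanly because the majorant $(1+|z|)^{-(m+1)}$ is radially decreasing and its exponent strictly exceeds $m$, so that its integral over $\R^m$ converges and the sums comparing to it are absolutely controlled.
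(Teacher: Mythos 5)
Your proof is correct and follows essentially the same route as the paper's: truncate $f$ to a large ball, use Riemann integrability of the piecewise smooth truncation to handle the compact part, and use rapid decay to bound both the integral tail and the lattice-sum tail uniformly in $X$. The only cosmetic difference is that you spell out the uniform tail estimate via a radially decreasing majorant $(1+|z|)^{-(m+1)}$, whereas the paper simply asserts that such an $N$ can be chosen; both are fine.
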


\begin{proof}
For any $N>0$, let $f_N(y)$ be equal to $f(y)$ if $|y|\leq N$, and 0 otherwise.  Then $f_N$ is piecewise smooth with bounded support, and so is Riemann integrable.  Thus we have
\begin{equation}\label{rs2}
\int f_N(y)\D y =\lim_{X\to\infty} \frac1{X^m}\sum_{y\in\Z^m}f_N(y/X).
\end{equation}
Since $f$ is rapidly decreasing, for any $\varepsilon>0$ we may choose $N$ large enough so that 
%\int |f(y)-g_N(y)|\D y=
$\int_{|y|> N}|f(y)|\D y<\varepsilon$ {\em and} $
%(1/X^m)\sum_{y\in\Z^m}|f(y)-g_N(y/X)|=
(1/X^m)\sum_{y\in\Z^m,\,|y/X|>N} |f(y/X)|<\varepsilon$ for any $X\geq
1$.  For this value of $N$, the left hand side of (\ref{rs2}) is
within $\varepsilon$ of the left hand side of (\ref{rs}), while for
each $X\geq 1$, the expression in the limit on the right hand side of
(\ref{rs2}) is within $\varepsilon$ of the expression in the limit on
the right hand side of (\ref{rs}).  Since we have equality in
(\ref{rs2}), we conclude that the left hand side of (\ref{rs}) is
within $2\varepsilon$ of both the $\liminf_{X\to\infty}$ and the
$\limsup_{X\to\infty}$ of the expression in the limit of the right
hand side of (\ref{rs}).  Since $\varepsilon$ is arbitrarily small, we
have proven (\ref{rs}).
\end{proof}

%\begin{remark}{\em 
Note that Lemma~\ref{rdlemma} does not necessarily hold if we drop the
condition that $f$ is rapidly decaying.  For example, if $f$ the
characteristic of a finite-volume region having a cusp going off to
infinity containing a rational line through the origin (and thus
infinitely many lattice points on that line), then the left hand side
of (\ref{rs}) is finite while the expression in the limit on the right
hand side of (\ref{rs}) is infinite for any rational value of $X$.
%}
%We may use Lemma~\ref{rdlemma} as follows.  
%\end{remark}

%Let $B_N$ denote the box $[-N,N]\times \cdots \times [-N,N]$, where $N$ is large enough so that $\int_{B_N}D(x)\D x\geq(1-\varepsilon)\int D(x)\D x$.
% and (ii) for every $X$ sufficiently large, we have $(1/X^{n(n+1)/2})\sum_{Q \rm{ integral}, Q/X\notin B_N} D(Q/X)<\epsilon$.  
Lemma~\ref{rdlemma} implies in particular that 
\begin{equation}\label{D0}
\lim_{X\to\infty}\frac1{X^{n(n+1)/2}}{\sum_{Q {\rm {\;integral}}}D(Q/X)}=1
\end{equation}
for any nice distribution $D$.

Now any piecewise smooth rapidly decaying function can be approximated arbitrarily well by a finite linear combination of characteristic functions of boxes.  Let $D$ be a nice distribution. For any $\varepsilon>0$, 
% and any choice of large real number $N$, we may find two nice distributions $D_0$ and $D_1$ such that: 1) $D_0$ and $D_1$ are each finite weighted averages of box distributions ${\rm U}(\cdot,\cdot)$, and 2) for integral quadratic forms $Q\in B_N$, we have $D_0(Q)<D(Q)<D_1(Q)$.
we may find a nice distribution $D_\varepsilon$ that is a finite weighted average of box distributions ${\rm U}(\cdot,\cdot)$, such that
\begin{equation}\label{lteps}
\int |D(y)-D_\varepsilon(y)|\D y<\varepsilon.
\end{equation}
By Lemma~\ref{rdlemma}, we then have
\begin{equation}\label{ltepscor}
\lim_{X\to\infty} \frac1{X^{n(n+1)/2}}\sum_{Q{\rm \;integral}}|D(Q/X)-D_\varepsilon(Q/X)| < \varepsilon.
\end{equation}

To show that $\rho_n^D(p)=\rho_n(p)$, we note that
\begin{align}\label{D1}
\rho_n(p)=\rho_n^{D_\varepsilon}(p)&=\lim_{X\to\infty}\frac{\sum_{Q {\rm {\:integral, isotropic/}}\Z_p}D_\varepsilon(Q/X)}{\sum_{Q {\rm {\;integral}}}D_\varepsilon(Q/X)}\\
&=\lim_{X\to\infty}\frac{\sum_{Q {\rm {\:integral, isotropic/}}\Z_p}D_\varepsilon(Q/X)}{X^{n(n+1)/2}}\label{D12}\\
&=\lim_{X\to\infty}\frac{\sum_{Q {\rm {\:integral, isotropic/}}\Z_p}D(Q/X)+E(X,\varepsilon)}{X^{n(n+1)/2}}\label{D14}\\
&=\lim_{X\to\infty}\frac{\sum_{Q {\rm {\:integral, isotropic/}}\Z_p}D(Q/X)+E(X,\varepsilon)}{ \sum_{Q {\rm {\;integral}}}D(Q/X)},\label{D15}
\end{align}
where for sufficiently large $X$ we have $|E(X,\varepsilon)|<\varepsilon X^{n(n+1)/2}$ by (\ref{ltepscor}); here the first equality follows because $D_\varepsilon$ is a finite weighted average of box distributions ${\rm U}(\cdot,\cdot)$, 
the second equality follows from the definition~(\ref{probdef2}), and the third and fifth equalities follow from (\ref{D0}).
Letting $\varepsilon$ tend to~0 in (\ref{D15}) now yields $\rho_n(p)=\rho_n^D(p)$, proving item (a) for general nice distributions $D$.

Analogously, we have 
\begin{align}\label{D2}
\int_{Q {\rm {\:isotropic/}}\R}D_\varepsilon(Q)\D Q=
\rho_n^{D_\varepsilon}(\infty)&=\lim_{X\to\infty}\frac{\sum_{Q {\rm {\:integral, isotropic/}}\R}D_\varepsilon(Q/X)}{\sum_{Q {\rm {\;integral}}}D_\varepsilon(Q/X)}\\
&=\lim_{X\to\infty}\frac{\sum_{Q {\rm {\:integral, isotropic/}}\R}D(Q/X)+E'(X,\varepsilon)}{\sum_{Q {\rm {\;integral}}}D(Q/X)},\label{D22}
\end{align}
where again for sufficiently large $X$ we have $|E'(X,\varepsilon)|<\varepsilon X^{n(n+1)/2}$. By (\ref{lteps}), the leftmost expression in~(\ref{D2}) approaches $\int_{Q {\rm {\:isotropic/}}\R}D(Q)\D Q$ as $\varepsilon\to0$, while expression (\ref{D22}) approaches
$\rho_n^D(\infty)$ by definition~(\ref{probdef2}).  This thus proves item (b) for general nice distributions.  In particular, we have also proven that 
\begin{equation}\label{epslim}
\lim_{\varepsilon\to0} \rho_n^{D_\varepsilon}(\infty) =\rho_n^D(\infty).
\end{equation}

Finally, we have in a similar manner:
\begin{align}\label{D3}
\rho_n^{D_\varepsilon}(\infty)\prod_p 
\rho_n(p)=\rho^{D_\varepsilon}_n&=\lim_{X\to\infty}\frac{\sum_{Q {\rm {\:integral, isotropic/}}\Z}D_\varepsilon(Q/X)}{\sum_{Q {\rm {\;integral}}}D_\varepsilon(Q/X)}\\
&=\lim_{X\to\infty}\frac{\sum_{Q {\rm {\:integral, isotropic/}}\Z}D(Q/X)+E''(X,\varepsilon)}{\sum_{Q {\rm {\;integral}}}D(Q/X)},\label{D32}
\end{align}
where again for sufficiently large $X$ we have $|E''(X,\varepsilon)|<\varepsilon X^{n(n+1)/2}$. By (\ref{epslim}), the leftmost expression in~(\ref{D3}) approaches $\rho_n^{D}(\infty)\prod_p 
\rho_n(p)$ as $\varepsilon\to0$, while expression (\ref{D32}) approaches $\rho_n^D$ by definition.  We have proven also item (c) for general nice distributions, as desired.

\section{The density of $n$-ary quadratic forms over $\Z_p$ that are isotropic: Proof of Theorem~\ref{mainlocal}}\label{sec:Zp}

\subsection{Preliminaries on $n$-ary quadratic forms over $\Z_p$}

Fix a prime $p$. For any free $\Z_p$-module $V$ of finite rank, there
is a unique additive $p$-adic Haar measure $\mu_V$ on $V$ which we
always normalize so that $\mu_V(V)=1$.  All densities/probabilities
are computed with respect to this measure.  In this section, we take
$V=V_n$ to be the $n(n+1)/2$-dimensional $\Z_p$-module of $n$-ary
quadratic forms over $\Z_p$. We then work out the density $\rho_n(p)$
(i.e. measure with respect to $\mu_V$) of the set of $n$-ary quadratic
forms over $\Z_p$ that are isotropic.

We start by observing that a primitive $n$-ary quadratic form over
$\Z_p$ can be anisotropic only if, either: (I) the reduction
modulo~$p$ factors into two conjugate linear factors defined over a
quadratic extension of $\F_p$, or (II) the reduction modulo~ $p$ is a
constant times the square of a linear form over $\F_p$. Let
$\xi_1^{(n)}$ and $\xi_2^{(n)}$ be the probabilities of Cases I and
II, i.e. the densities of these two types of quadratic forms in
$V_n$. Then
\[ \xi_0^{(n)} = 1 - \xi_1^{(n)}- \xi_2^{(n)}-\frac1{p^{n(n+1)/2}} \]
is the probability that a form is primitive, but not in Cases I or II.
Let $\alpha_1^{(n)}$ (resp.\ $\alpha_2^{(n)}$) be the probability 
of isotropy for quadratic forms in Case I (resp.\ Case II).
Then
\[ \rho_n(p)=\xi_0^{(n)}+\xi_1^{(n)}\alpha_1^{(n)}+\xi_2^{(n)}\alpha_2^{(n)}+\frac1{p^{n(n+1)/2}}\rho_n(p),\]
implying that
\begin{equation}\label{rhoformula}
\rho_n(p)=\frac{p^{n(n+1)/2}}{p^{n(n+1)/2}-1}\bigl(\xi_0^{(n)}+\xi_1^{(n)}\alpha_1^{(n)}+\xi_2^{(n)}\alpha_2^{(n)}\bigr).
\end{equation}

\subsection{Some counting over finite fields}

Let $\eta_1^{(n)}$ (resp.\ $\eta_2^{(n)}$) be the probability that a
quadratic form is in
Case I (resp.\ Case II) given the ``point condition'' that the
coefficient of $x_1^2$ is a unit. Similarly, let $\nu_1^{(n)}$ be the
probability that a
quadratic form is in Case I given the ``line condition'' that the
binary quadratic form $Q(x_1,x_2,0,\ldots,0)$ is irreducible modulo
$p$.  Note that it is impossible to be in Case II given the line
condition, but we may also define $\nu_2^{(n)}=0$.  Set $\eta_0^{(n)}
= 1- \eta_1^{(n)} - \eta_2^{(n)}$ and $\nu_0^{(n)} = 1-
\nu_1^{(n)}-\nu_2^{(n)}= 1-\nu_1^{(n)}$.  The values of $\xi_j^{(n)}$,
$\eta_j^{(n)}$, $\nu_j^{(n)}$, are given by the following easy lemma.

\begin{lemma}\label{badformsdensity}
The probabilities that a random quadratic form over $\Z_p$
is in Case I or Case II are as follows.
\begin{itemize}
\item Case I (all; relative to point condition; relative to line condition)
\[ \xi_1^{(n)}=\frac{(p^n-1)(p^n-p)}{2(p+1)p^{n(n+1)/2}}; \qquad
\eta_1^{(n)}=\frac{p^{n-1}-1}{2p^{n(n-1)/2}}; \qquad
\nu_1^{(n)}=\frac{1}{p^{(n-1)(n-2)/2}}. \]

\item Case II (all; relative to point condition; relative to line condition)
\[\xi_2^{(n)}=\frac{p^n-1}{p^{n(n+1)/2}}; \qquad
\eta_2^{(n)}=\frac{1}{p^{n(n-1)/2}}; \qquad
\nu_2^{(n)}=0.\]
\end{itemize}
\end{lemma}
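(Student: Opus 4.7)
The plan is to compute each of the six densities by counting reductions $\overline{Q}\in\F_p[x_1,\ldots,x_n]_2$ in the relevant case and then dividing by the appropriate total count. The computation will rest on two clean parametrizations over $\F_p$: a Case~I reduction takes the form $\overline{Q}=c\ell\overline{\ell}$, where $c\in\F_p^*$ and $\ell=a_1x_1+\cdots+a_nx_n$ is a linear form with coefficients in $\F_{p^2}$ that is not $\F_{p^2}$-proportional to any $\F_p$-linear form, taken modulo the $\F_{p^2}^*$-scaling $(c,\ell)\mapsto(c/N(\lambda),\lambda\ell)$ and the Frobenius swap $\ell\leftrightarrow\overline{\ell}$; a Case~II reduction takes the form $\overline{Q}=c\ell^2$ with $c\in\F_p^*$ and $\ell$ a nonzero $\F_p$-linear form, taken modulo the $\F_p^*$-action $(c,\ell)\mapsto(c/\lambda^2,\lambda\ell)$.

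For the absolute densities, I will first count nonzero $\F_{p^2}$-linear forms in $n$ variables, $p^{2n}-1$ in total, of which exactly $(p^n-1)(p+1)$ are $\F_{p^2}$-proportional to some $\F_p$-form (one such $\F_{p^2}$-line lying over each of the $(p^n-1)/(p-1)$ points of $\P^{n-1}(\F_p)$), leaving $(p^n-1)(p^n-p)$ valid choices of $\ell$. Because a relation $\overline{\ell}=\lambda\ell$ would force $\ell$ to be $\F_p$-proportional, the Frobenius swap acts freely on the $\F_{p^2}^*$-orbits, so these $\ell$ produce $(p^n-1)(p^n-p)/(2(p^2-1))$ distinct reductions for each $c$; multiplying by the $p-1$ choices of $c$ and dividing by $p^{n(n+1)/2}$ yields $\xi_1^{(n)}$. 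For Case~II the $(p-1)(p^n-1)$ pairs $(c,\ell)$ collapse freely under the $\F_p^*$-action to $p^n-1$ distinct reductions, yielding $\xi_2^{(n)}$. The point-condition densities $\eta_j^{(n)}$ will then follow by noting that the coefficient of $x_1^2$ in $c\ell\overline{\ell}$ equals $cN(a_1)$ and in $c\ell^2$ equals $ca_1^2$, so imposing $a_1\neq 0$ on $\ell$ in each parametrization and dividing by the number $(p-1)p^{n(n+1)/2-1}$ of reductions with $a_1(\overline{Q})\neq 0$ produces the asserted fractions by identical orbit bookkeeping.

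For the line condition, I note first that Case~II is ruled out immediately because $c(a_1x_1+a_2x_2)^2$ is always reducible, giving $\nu_2^{(n)}=0$. For Case~I, the restriction $c(a_1x_1+a_2x_2)(\overline{a}_1x_1+\overline{a}_2x_2)$ is irreducible exactly when the $\F_{p^2}$-line spanned by $(a_1,a_2)$ is not defined over $\F_p$; there are $p^2-p$ such lines in $\P^1(\F_{p^2})$, contributing $(p^2-1)(p^2-p)$ valid pairs $(a_1,a_2)$, and this condition already forces $\ell$ itself to be non-$\F_p$-proportional (otherwise $(a_1,a_2)$ would inherit the proportionality). With the remaining $n-2$ coordinates of $\ell$ free in $\F_{p^2}$, the same orbit counting, divided by the number $\frac{1}{2}p(p-1)^2\cdot p^{n(n+1)/2-3}$ of reductions with irreducible $(x_1,x_2)$-restriction (a standard elementary count of anisotropic binary forms over $\F_p$), collapses to $\nu_1^{(n)}=p^{-(n-1)(n-2)/2}$. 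The main care throughout will be the bookkeeping of the $\F_{p^2}^*$-orbit sizes alongside the Frobenius swap; once this is consistent, the computation goes through uniformly in $p$ (including $p=2$) without any auxiliary case distinctions.
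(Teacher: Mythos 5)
Your proposal is correct and follows essentially the same route as the paper: both proofs parametrize Case~I reductions by conjugate pairs of non-$\F_p$-proportional linear forms over $\F_{p^2}$ up to scaling (and Case~II by $\F_p$-linear forms up to scaling), count those satisfying the point or line condition, and divide by the corresponding total count of reductions. Your orbit bookkeeping under the $\F_{p^2}^*$-action and the Frobenius swap reproduces exactly the paper's counts $\frac{(p^n-1)(p^n-p)}{2(p+1)}$, $(p^{2(n-1)}-p^{n-1})(p-1)/2$, and $p^{2n-3}(p-1)^2/2$, so all six formulae check out.
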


\begin{proof}
Case I: There are $(p^{2n}-1)/(p^2-1)$ linear forms over~$\F_{p^2}$ up
to scaling; subtracting the $(p^{n}-1)/(p-1)$ which are defined
over~$\F_p$, dividing by~$2$ to account for conjugate pairs and then
multiplying by $p-1$ for scaling gives $\frac{(p^n-1)(p^n-p)}{2(p+1)}$
Case I forms, and hence the value of~$\xi_1^{(n)}$.

Similarly, the number of Case I quadratic forms satisfying the point condition
is $(p^{2(n-1)}-p^{n-1})(p-1)/2$.  Dividing by the probability $1-1/p$
of the point condition holding gives $p^n(p^{n-1}-1)/2$ and hence the
value of~$\eta_1^{(n)}$.

Lastly, the number of Case I quadratic forms satisfying the line
condition is~$p^{2n-3}(p-1)^2/2$; dividing by the
probability~$\xi_1^{(2)}$ of the line condition holding gives
$p^{2n-1}$, and hence the value of~$\nu_1^{(n)}$.

Case II is similar and easier: the number of Case II quadratic forms
is $p^n-1$, of which $p^n-p^{n-1}$ satisfy the point condition and
none satisfy the line condition; the given formulae follow.
\end{proof}

\subsection{Recursive formulae}

We now outline our strategy for computing the densities~$\rho_n(p)$
using~(\ref{rhoformula}), by evaluating~$\alpha_j^{(n)}$ for $j=1,2$.
If a quadratic form is in Case I, then we may make a (density-preserving)
change of variables, transforming it so that its reduction is an
irreducible binary form in only two variables.  Now isotropy forces
the values of those variables, in any primitive vector giving a zero,
to be multiples of~$p$; so we may scale those variables by~$p$ and
divide the form by~$p$.  Similarly, if a form is in Case II, then we
transform it so that its reduction is the square of a single variable,
scale that variable and divide out.  After carrying out this process
once, we again divide into cases and repeat the procedure, which leads
us back to an earlier situation but with either the line or point
conditions, which we need to allow for.

To make this precise, we introduce some extra notation for the
probability of isotropy for quadratic forms which are in Case I or
Case II after the
initial transformation: let $\beta_1^{(n)}$ (resp.~$\beta_2^{(n)}$) be
the probability of isotropy given we are
in Case I (resp.\ Case II) after one step when the
original quadratic form was in Case I, and similarly $\gamma_1^{(n)}$
(resp.~$\gamma_2^{(n)}$) the probability of 
isotropy given we are in Case I (resp.\ Case II)
after one step when the original quadratic form was in Case II.

\begin{lemma}. \label{alphaformulas}

\begin{enumerate}
\item[\rm 1.] $\alpha_1^{(2)} = 0$, and for $n\ge3$,
\begin{equation*}
%\label{eqn:alpha1}
 \alpha_1^{(n)} = \xi_0^{(n-2)} + \xi_1^{(n-2)} \beta_1^{(n)}
+ \xi_2^{(n-2)} \beta_2^{(n)} + \frac{1}{p^{(n-1)(n-2)/2}} (\nu_0^{(n)}
 + \nu_1^{(n)} \alpha_1^{(n)} + \nu_2^{(n)} \alpha_2^{(n)}).
\end{equation*}
\item[\rm 2.] $\alpha_2^{(1)} = 0$, and for $n\ge2$,
\begin{equation*}
%\label{eqn:alpha2}
 \alpha_2^{(n)} = \xi_0^{(n-1)} + \xi_1^{(n-1)} \gamma_1^{(n)}
+ \xi_2^{(n-1)} \gamma_2^{(n)} + \frac{1}{p^{n(n-1)/2}} (\eta_0^{(n)}
 + \eta_1^{(n)} \alpha_1^{(n)} + \eta_2^{(n)} \alpha_2^{(n)} ).
\end{equation*}
\end{enumerate}
\end{lemma}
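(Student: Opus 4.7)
The strategy is a one-step $p$-adic descent: for a form $Q$ in Case~I (resp.~Case~II), I make an explicit linear change of variables and rescale by $p$ to produce a new form $Q'$ whose isotropy is equivalent to that of $Q$ and whose coefficients have an easily-described distribution. A case split on the reduction $\bar{Q'} \bmod p$ will then match each of the four summands in the recursion. The base cases are immediate: for $n=2$ in Case~I, $\bar Q$ is an anisotropic binary form over $\F_p$, so any primitive zero of $Q$ over $\Z_p$ would reduce to a nontrivial zero of $\bar Q$, forcing $\alpha_1^{(2)} = 0$; for $n=1$ in Case~II, $Q = u x_1^2$ with $u \in \Z_p^\times$ has no nontrivial zero, so $\alpha_2^{(1)} = 0$.

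For the inductive step of part~1, $\GL_n(\Z_p)$-invariance of the Haar measure, the case classification, and isotropy together let me assume $Q = Q_0(x_1,x_2) + pR(x_1,\ldots,x_n)$ for a fixed irreducible binary form $Q_0$ over $\F_p$ and a uniformly random $R \in V_n(\Z_p)$.  Any primitive $\Z_p$-zero $(y_1,\ldots,y_n)$ of $Q$ must satisfy $Q_0(y_1,y_2) \equiv 0 \pmod p$, which by anisotropy of $Q_0$ forces $y_1, y_2 \in p\Z_p$; writing $y_i = p y_i'$ for $i=1,2$ shows that $Q$ is isotropic iff $Q'(x) := Q(p x_1, p x_2, x_3, \ldots, x_n)/p$ is. Expanding yields $Q' = p Q_0 + p^2 R_1 + p L + R_2$, where $R_1, L, R_2$ are the $(x_1,x_2)$-block, the cross-terms, and the $(x_3,\ldots,x_n)$-block of $R$; the key structural fact is that $R_2$ is uniform in $V_{n-2}(\Z_p)$ and independent of $R_1$ and $L$, and that $\bar{Q'}$ equals $\bar R_2$ extended by zero coefficients in $x_1, x_2$.

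Splitting on $\bar R_2$ now produces the four terms directly. With probability $\xi_0^{(n-2)}$, $\bar R_2$ is primitive and in Case~0 of the $(n-2)$-variable problem, so $\bar R_2$ has a smooth $\F_p$-point, and Hensel's lemma (applied at $x_1 = x_2 = 0$) lifts it to a $\Z_p$-zero of $Q'$, giving the first summand. With probabilities $\xi_1^{(n-2)}$ and $\xi_2^{(n-2)}$, $\bar R_2$ is in Case~I or Case~II of the $(n-2)$-variable classification, whence $Q'$ itself lies in Case~I or Case~II of the $n$-variable problem arising from a Case~I original; these contribute $\xi_1^{(n-2)} \beta_1^{(n)}$ and $\xi_2^{(n-2)} \beta_2^{(n)}$ by the definitions of the $\beta_j^{(n)}$. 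In the remaining case $R_2 \equiv 0 \pmod p$ (probability $p^{-(n-1)(n-2)/2}$), writing $R_2 = p R_2'$ and dividing by $p$ gives $Q'/p = Q_0 + p R_1 + L + R_2'$, whose restriction to $(x_1, x_2)$ reduces to the irreducible $Q_0$ (so the line condition holds) and whose remaining coefficients are uniformly distributed; splitting by the case of $Q'/p$ and using $\nu_2^{(n)} = 0$ produces the final summand $\nu_0^{(n)} + \nu_1^{(n)} \alpha_1^{(n)} + \nu_2^{(n)} \alpha_2^{(n)}$.

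Part~2 follows the same template starting from $Q = c x_1^2 + p R$ with $c \in \F_p^\times$ fixed: a primitive zero forces $y_1 \in p\Z_p$, the substitution $x_1 \mapsto p x_1$ and division by $p$ yields a form whose reduction mod $p$ is an $(n-1)$-ary form in $x_2,\ldots,x_n$, and the case split on this reduction gives the $\gamma_j^{(n)}$ terms; the leftover branch (probability $p^{-n(n-1)/2}$) produces a form whose $x_1^2$-coefficient is a unit (the point condition), yielding the $\eta$-summand. The main obstacle, implicit in several places above, is the $\GL_n(\Z_p)$-invariance claim that the probability of isotropy of a form conditioned simultaneously on Case~I (resp.~II) \emph{and} the line (resp.~point) condition coincides with the unconditional $\alpha_1^{(n)}$ (resp.~$\alpha_2^{(n)}$). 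Verifying this rests on observing that Case~I together with the line condition forces $\bar Q$ to be an irreducible binary form supported only on $x_1, x_2$, and exploiting the transitivity of a suitable block-diagonal subgroup of $\GL_n(\Z_p)$, which preserves both isotropy and the line condition, to identify the conditional distribution with the one defining $\alpha_1^{(n)}$; the point-condition argument is entirely analogous.
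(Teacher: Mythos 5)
Your argument is correct and is essentially the paper's own proof: the same substitution $Q \mapsto \frac{1}{p}Q(px_1,px_2,x_3,\ldots,x_n)$ (resp.\ $\frac{1}{p}Q(px_1,x_2,\ldots,x_n)$), the same four-way split on the reduction of the resulting form (smooth point plus Hensel for the $\xi_0$ term, Cases I/II giving the $\beta$'s resp.\ $\gamma$'s, and the identically-zero branch of probability $p^{-(n-1)(n-2)/2}$ resp.\ $p^{-n(n-1)/2}$ yielding the $\nu$- resp.\ $\eta$-terms), and you are in fact more explicit than the paper about the distributional bookkeeping and the invariance needed to reuse $\alpha_j^{(n)}$ under the line/point conditions. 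One small imprecision: Case I together with the line condition does not literally force $\bar Q$ to be supported on $x_1,x_2$ (e.g.\ the norm form of $x_1+ix_2+x_3$ satisfies both), but a unipotent substitution fixing the plane $x_3=\cdots=x_n=0$ brings it to that shape while preserving the line condition and the measure, which is exactly the block-triangular transitivity you invoke, so the argument stands.
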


%\pagebreak

\begin{proof}
We have $\alpha_1^{(2)} = 0$ since a binary quadratic form that is
irreducible over~$\F_p$ is anisotropic. Now assume that $n \ge 3$, and
(for Case I) $Q(x_1, \ldots,x_n) \pmod p$ has two conjugate linear
factors.  Without loss of generality, the reduction modulo~$p$ is a
binary quadratic form in $x_1$ and $x_2$.  Now any primitive vector
giving a zero of~$Q$ must have its first two coordinates divisible
by~$p$, so replace $Q(x_1, \ldots,x_n)$ by $\frac{1}{p}Q(p x_1,p
x_2,x_3, \ldots, x_n)$. The reduction modulo~$p$ is now a quadratic
form in $x_3, \ldots,x_n$.  If the new~$Q$ is identically zero
modulo~$p$, then, after dividing it by~$p$, we obtain a new integral
form that lands in Cases I and II with probabilities $\nu_1^{(n)}$ and
$\nu_2^{(n)}$, respectively, since it satisfies the line condition;
otherwise, we divide into cases as before, with the probabilities of
being in each case given by $\xi_j^{(n-2)}$.

 The result for $\alpha_2^{(n)}$ is proved similarly: without loss of
 generality the reduction modulo~$p$ is a quadratic form in $x_1$
 only, we replace $Q(x_1, \ldots,x_n)$ by $\frac{1}{p}Q(p x_1,x_2,
 \ldots, x_n)$, whose reduction modulo~$p$ is a quadratic form in
 $x_2, \ldots,x_n$.  If the new~$Q$ is identically zero modulo~$p$,
 then, after dividing by~$p$, we have an integral form that lands in
 Cases I and II with probabilities $\eta_1^{(n)}$ and $\eta_2^{(n)}$,
 respectively, since it satisfies the point condition; otherwise, we
 divide into cases, with probabilities~$\xi_j^{(n-1)}$.
\end{proof}

It remains to compute $\beta_1^{(n)}$ (for $n \ge 4$), $\beta_2^{(n)}$
(for $n \ge 3$), $\gamma_1^{(n)}$ (for $n \ge 3$) and $\gamma_2^{(n)}$
(for $n \ge 2$).  Since $\xi_1^{(1)} = 0$, we do not need to compute
$\beta_1^{(3)}$ or~$\gamma_1^{(2)}$, which are in any case undefined.

\begin{lemma}.

\label{lem:betagamma}
%{ \renewcommand{\theenumi}{(\roman{enumi})}
\begin{enumerate}
\item[\rm (i)] If $n \ge 4$ then $\beta_1^{(n)} = \nu_0^{(n-2)}
 + \nu_1^{(n-2)} \beta_1^{(n)}$; also, $\beta_1^{(4)} = 0$.
\item[\rm (ii)] If $n \ge 3$ then $\beta_2^{(n)} = \nu_0^{(n-1)} + \nu_1^{(n-1)}
\gamma_1^{(n)}$; also, $\beta_2^{(3)} = 0$.
\item[\rm (iii)] If $n \ge 3$ then $\gamma_1^{(n)} = \eta_0^{(n-2)}
 + \eta_1^{(n-2)} \beta_1^{(n)} + \eta_2^{(n-2)} \beta_2^{(n)}$;
also, $\gamma_1^{(3)} = 0$.
%   (\nu_0^{(n-1)} + \nu_1^{(n-1)} \gamma_1^{(n)}). \]
\item[\rm (iv)] If $n \ge 2$ then $\gamma_2^{(n)} = \eta_0^{(n-1)} + \eta_1^{(n-1)}
\gamma_1^{(n)} + \eta_2^{(n-1)} \gamma_2^{(n)}$; also, $\gamma_2^{(2)} = 0$.
\end{enumerate} 
%}
\end{lemma}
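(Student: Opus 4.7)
Plan. Each of $\beta_1^{(n)}$, $\beta_2^{(n)}$, $\gamma_1^{(n)}$, $\gamma_2^{(n)}$ records the $\Z_p$-isotropy probability after two successive transformations of the type used in the proof of Lemma~\ref{alphaformulas}: the first transformation is dictated by the case (I or II) of $Q$, conditioned so that $Q'$ lies in a specified case, and the second is dictated by the case of $Q'$. For each of the four combinations I would carry out the second transformation explicitly, determine the structure of the resulting integral form $Q''$ and of its reduction $\overline{Q''}$, and then read off the recursion by decomposing according to the case of $\overline{Q''}$.

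The heart of the computation is the Case~I--after--Case~I scenario giving~(i). Starting from $Q = Q_1(x_1,x_2) + pL(x_1,x_2;x_3,\ldots,x_n) + pQ_2(x_3,\ldots,x_n)$ with $\bar Q_1$ irreducible binary, the first transformation produces $Q' = pQ_1 + pL + Q_2'$ whose reduction is a form in the $n-2$ effective variables $x_3,\ldots,x_n$; conditioning on $Q'$ being in Case~I and making an $\F_p$-change of basis that fixes $x_1,x_2$, one may assume $\overline{Q_2'}$ is irreducible binary in $x_3,x_4$. An explicit expansion then shows that $Q''(x) := \tfrac{1}{p}Q'(x_1,x_2,px_3,px_4,x_5,\ldots,x_n)$ is integral, that its reduction involves only the $n-2$ effective variables $x_1,x_2,x_5,\ldots,x_n$, and that its restriction to the $(x_1,x_2)$-subspace equals $\bar Q_1$ --- precisely the line condition. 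Decomposing the isotropy probability of $Q$ (equivalently of $Q''$) according to the case of $\overline{Q''}$ under this line condition then yields
\[
\beta_1^{(n)} \;=\; \nu_0^{(n-2)}\cdot 1 \;+\; \nu_1^{(n-2)}\cdot\beta_1^{(n)} \;+\; \nu_2^{(n-2)}\cdot 0,
\]
since Case~$0$ lifts by Hensel, Case~II has probability $0$ under the line condition, and in Case~I the setup is structurally identical to the one defining $\beta_1^{(n)}$, with $Q''$ in the role of the original Case~I form. Parts~(ii)--(iv) follow by entirely parallel computations: the second transformation brings the count of effective variables down to $n-2$ or $n-1$, and whether the inherited extra constraint on $\overline{Q''}$ is the line condition (bringing in $\nu_j$) or the point condition (bringing in $\eta_j$) is determined by whether each step was of Case~I (scaling two variables, preserving an irreducible binary in the other two) or Case~II (scaling one variable, preserving a unit coefficient of the corresponding square).

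The four base cases $\beta_1^{(4)}$, $\beta_2^{(3)}$, $\gamma_1^{(3)}$, $\gamma_2^{(2)}$ sit exactly at the values of $n$ for which the effective variables surviving the second transformation collapse to the distinguished pair (line condition) or distinguished singleton (point condition) alone, so that $\nu_1^{(2)} = 1$ or $\eta_2^{(1)} = 1$ and the recursion degenerates. In each of these I would argue vanishing directly and without descent: from a hypothesized primitive zero $(a_1,\ldots,a_n)$ of $Q$, the explicit structure of $\bar Q$ (irreducible binary or unit square) forces the ``first-step'' coordinates into $p\Z_p$; after dividing through by $p$, the structure of $\overline{Q'}$ then forces the ``second-step'' coordinates into $p\Z_p$; in the base-case range these two batches exhaust all of $x_1,\ldots,x_n$, so $p \mid a_i$ for every $i$ and the zero is not primitive. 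The step I expect to be the most delicate is in the structural part of the main computation: when invoking an $\F_p$-change of basis to place the Case~I factorization of $\overline{Q_2'}$ (a priori a linear form over $\F_{p^2}$ involving all of $x_3,\ldots,x_n$) into the canonical two-variable position, this change must be chosen to fix the distinguished variables $x_1,x_2$, in order that the line condition inherited from the first step genuinely propagates to the second and the conditional probabilities at the next round come out to exactly $\nu_j^{(n-2)}$ (and analogously for the point-condition variants).
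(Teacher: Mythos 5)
Your proposal is correct and follows essentially the same route as the paper: perform the second descent step explicitly, observe that the reduction of the resulting integral form inherits the line condition (if the first step was Case~I) or the point condition (if it was Case~II) on $n-2$ or $n-1$ effective variables according to the case of the second step, decompose by the case of the new reduction to get the self-referential recursions, and prove the base cases by showing a primitive zero would have all coordinates divisible by $p$. The only differences are cosmetic (you scale $x_3,x_4$ in place rather than swapping them into the first two positions as the paper does, and you make explicit the point, left implicit in the paper, that the auxiliary change of basis must act only on the undistinguished variables).
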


\begin{proof}
In Case I, the initial transformation leads to a quadratic form for
which the valuations of the coefficients satisfy\footnote{In this and
  the similar arrays which follow, we put into position $(i,j)$ the
  known condition on $v(a_{i,j})$, so the top left entry refers to the
  coefficient of $x_1^2$, the top right to $x_1x_n$ and the bottom
  right to $x_n^2$.}
\begin{equation}\label{array:caseI}
\begin{array}{ccccccc}
\ge 1 & \ge 1 & \ge 1 & \ge 1 & \ge 1 & \ldots & \ge 1 \\ & \ge 1 &
\ge 1 & \ge 1 & \ge 1 & \ldots & \ge 1 \\ & & \ge 0 & \ge 0 & \ge 0 &
\ldots & \ge 0 \\ & & & \ge 0 & \ge 0 & \ldots & \ge 0 \\ & & & & \ge
0 & \ldots & \ge 0 \\ & & & & & \ddots& \vdots \\ & & & & & & \ge 0
\end{array}
\end{equation}
and $\beta_1^{(n)}$ (resp.~$\beta_2^{(n)}$) are the probabilities of
isotropy given that the reduction modulo $p$ of the form in $x_3,x_4,\ldots,x_n$ is in Case~I
(resp.\ Case II).

Similarly, in Case II the initial transformation leads to
\begin{equation}\label{array:caseII}
\begin{array}{ccccccc}
=1 & \ge 1 & \ge 1 & \ge 1 & \ge 1 & \ldots & \ge 1 \\
& \ge 0 & \ge 0 & \ge 0 & \ge 0 & \ldots & \ge 0 \\
& & \ge 0 & \ge 0 & \ge 0 & \ldots & \ge 0 \\
& & & \ge 0 & \ge 0 & \ldots & \ge 0 \\
& & & & \ge 0 & \ldots & \ge 0 \\
& & & & & \ddots& \vdots \\
& & & & & & \ge 0
\end{array}
\end{equation}
and $\gamma_1^{(n)}$ (resp.~$\gamma_2^{(n)}$) are the probabilities of
isotropy given that the reduction modulo $p$ of the form in $x_2,x_3,\ldots,x_n$ is in Case~I
(resp.\ Case~II).

\vspace{.1in}\noindent
(i) To evaluate~$\beta_1^{(n)}$ we may assume, after a second linear
change of variables, that we have
\[
\begin{array}{ccccccc}
\ge 1 & \ge 1 & \ge 1 & \ge 1 & \ge 1 & \ldots & \ge 1 \\
      & \ge 1 & \ge 1 & \ge 1 & \ge 1 & \ldots & \ge 1 \\
      &       & \ge 0 & \ge 0 & \ge 1 & \ldots & \ge 1 \\
      &       &       & \ge 0 & \ge 1 & \ldots & \ge 1 \\
      &       &       &       & \ge 1 & \ldots & \ge 1 \\
      &       &       &       &       & \ddots& \vdots \\
      &       &       &       &       &        & \ge 1
\end{array}
\]
and that the reductions modulo~$p$ of both $\frac{1}{p} Q(x_1,x_2,0,
\ldots,0)$ and $Q(0,0,x_3,x_4,0, \ldots,0)$ are irreducible binary
quadratic forms.  Any zero of~$Q$ must satisfy $x_3\equiv
x_4\equiv0\pmod{p}$.  This gives a contradiction when~$n=4$, so that
$Q(x_1, \ldots, x_4)$ is anisotropic, and $\beta_1^{(4)}=0$.
Otherwise, replacing $Q(x_1,\ldots,x_n)$ by $\frac{1}{p} Q(x_3,x_4,p
x_1, p x_2,x_5, \ldots,x_n)$ brings us back to the situation
in~(\ref{array:caseI}).  Now, however, the line condition holds, so
that Cases I and II occur with probabilities $\nu_1^{(n-2)}$ and
$\nu_2^{(n-2)}=0$ instead of~$\xi_1^{(n-2)}$ and $\xi_2^{(n-2)}$.

\vspace{.1in}\noindent
(ii) To evaluate~$\beta_2^{(n)}$, we may assume that the valuations of
the coefficients satisfy
\[ \begin{array}{ccccccc}
\ge 1 & \ge 1 & \ge 1 & \ge 1 & \ldots & \ge 1 \\
      & \ge 1 & \ge 1 & \ge 1 & \ldots & \ge 1 \\
      &       &   = 0 & \ge 1 & \ldots & \ge 1 \\
      &       &       & \ge 1 & \ldots & \ge 1 \\
      &       &       &       & \ddots & \vdots \\
      &       &       &       &        & \ge 1
\end{array} \]
and that the reduction modulo~$p$ of $\frac{1}{p} Q(x_1,x_2,0,
\ldots,0)$ is an irreducible binary quadratic form.  If $n=3$ then $Q$
is anisotropic, and $\beta_2^{(3)}=0$.  Otherwise, replacing
$Q(x_1,\ldots,x_n)$ by $\frac{1}{p} Q(x_2,x_3,p x_1,x_4, \ldots,x_n)$
brings us back to the situation in~(\ref{array:caseII}) but with the
line condition, so that Cases I and II occur with probabilities
$\nu_1^{(n-1)}$ and $\nu_2^{(n-1)}$ instead of~$\xi_1^{(n-1)}$,
$\xi_2^{(n-1)}$.

\vspace{.1in}\noindent
(iii) For $\gamma_1^{(n)}$, we may assume that the valuations of the
coefficients satisfy
\[ \begin{array}{ccccccc}
  = 1 & \ge 1 & \ge 1 & \ge 1 & \ldots & \ge 1 \\
      & \ge 0 & \ge 0 & \ge 1 & \ldots & \ge 1 \\
      &       & \ge 0 & \ge 1 & \ldots & \ge 1 \\
      &       &       & \ge 1 & \ldots & \ge 1 \\
      &       &       &       & \ddots & \vdots \\
      &       &       &       &        & \ge 1
\end{array} \]
and the reduction of $Q(0,x_2,x_3,0, \ldots,0)$ modulo~$p$ is
irreducible. Any zero of~$Q$ now satisfies $x_2\equiv
x_3\equiv0\pmod{p}$.  When $n=3$ this gives a contradiction, so
$Q(x_1, x_2, x_3)$ is anisotropic, and $\gamma_1^{(3)}=0$.  Otherwise,
replacing $Q(x_1,\ldots,x_n)$ by $\frac{1}{p} Q(x_3, p x_1, p x_2,x_4,
\ldots,x_n)$ brings us back to the situation in~(\ref{array:caseI})
but with the point condition, so that Cases I and II occur with
probabilities $\eta_1^{(n-2)}$ and $\eta_2^{(n-2)}$.

\vspace{.1in}\noindent
(iv) Lastly, for $\gamma_1^{(n)}$, we may assume that the valuations
of the coefficients satisfy
\[ \begin{array}{ccccccc}
  = 1 & \ge 1 & \ge 1 & \ldots & \ge 1 \\
      &   = 0 & \ge 1 & \ldots & \ge 1 \\
      &       & \ge 1 & \ldots & \ge 1 \\
      &       &       & \ddots & \vdots \\
      &       &       &        & \ge 1 \rlap{.}
\end{array} \]
If $n=2$ then $Q(x_1, x_2)$ is anisotropic , and $\gamma_2^{(2)}=0$.
Otherwise, replacing $Q(x_1,\ldots,x_n)$ by $\frac{1}{p} Q(x_2, p
x_1,x_3, \ldots,x_n)$ brings us back to the situation
in~(\ref{array:caseII}) but with the point condition.
\end{proof}

\subsection{Conclusion}

Using Lemmas~\ref{badformsdensity} and~\ref{lem:betagamma} we can
compute~$\beta_j^{(n)}$ and~$\gamma_j^{(n)}$ for $j=1,2$ and all~$n$:
we first determine $\beta_1$ from Lemma~\ref{lem:betagamma}~(i), then
$\beta_2^{(n)}$ and $\gamma_1^{(n)}$ together using
Lemma~\ref{lem:betagamma}~(ii,iii), and finally $\gamma_2^{(n)}$
using Lemma~\ref{lem:betagamma}~(iv).  The following table gives the
result:
\[ \begin{array}{c|c|c|c|c}
 & \beta_1^{(n)} & \beta_2^{(n)} & \gamma_1^{(n)} & \gamma_2^{(n)} \\[.01in] \hline
n = 2 & - & -&- & 0 \\[.01in]
n = 3 & - & 0  & 0 & 1/2\\[.01in]
n = 4 & 0 & (2p+1)/(2p+2) & (p+2)/(2p+2)& 1 - (p/(4(p^2+p+1)) \\[.01in]
n \ge 5 & 1 & 1& 1 & 1
\end{array} \]
Now, using Lemma~\ref{alphaformulas}, we
compute~$\alpha_1^{(n)}$ and $\alpha_2^{(n)}$:
\[ \begin{array}{c|c|c}
 & \alpha_1^{(n)} & \alpha_2^{(n)} \\[.01in] \hline
n = 2 & 0 & 1/(2p+2) \\[.01in]
n = 3 & 1/(p+1) & (p+2)/(2p+2)\\[.01in]
n = 4 & 1 - (p^3/(2(p+1)(p^2+p+1))) & 1 - (p^3/(4(p+1)(p^3+p^2+p+1)))\\[.01in]
n \ge 5 & 1 & 1
\end{array} \]
Finally, we compute $\rho_n(p)$ using~(\ref{rhoformula}), yielding 
the values stated in Theorem~\ref{mainlocal}.

\vspace{.1in}
Note that our proof of Theorem~\ref{mainlocal} also yields a (recursive) algorithm to determine whether 
a quadratic form over $\Q_p$ is isotropic.  Tracing through the algorithm, we see that, for a quadratic form of nonzero discriminant,  only finitely many recursive iterations are possible 
(since we may organize the algorithm so that at each such iteration the discriminant 
valuation is reduced), i.e., the algorithm always terminates.  
In particular, when $n\geq 5$, our algorithm always yields a zero for any $n$-ary quadratic form of nonzero discriminant; hence every nondegenerate quadratic form in $n\geq 5$ variables is isotropic.

\section{The density of $n$-ary quadratic forms over $\R$ that are indefinite: Proof of Theorem~\ref{mainreal}}\label{sec:R}
%Evaluation of $\rho^\GOE_n(\infty)$

\subsection{Preliminaries on the Gaussian Orthogonal Ensemble (\GOE)}

We wish to calculate the probability $\rho_{n}(\infty)$ that a real
symmetric matrix $M$ from the $n$-dimensional \GOE\ has an indefinite
spectrum.  The distribution of matrix entries in the \GOE\ is
invariant under orthogonal transformations. Since real symmetric
matrices can be diagonalised by an orthogonal transformation, the
\GOE\ measure can be written directly in terms of the eigenvalues
$\lambda(M)$, yielding the distribution
\begin{equation}
 \mathbb{P}\bigl(\lambda(M) \in [\lambda+ \D \lambda)\bigr)=\frac{1}{Z^{\mathrm{GOE}}_{n}} |\Delta(\lambda)|  \prod_{i=1}^n e^{-\frac{1}{4} \lambda_i^2}\D\lambda_i;
\end{equation} 
here 
$$\Delta(\lambda) := \prod_{1\leq i<j\leq n}(\lambda_j-\lambda_i) = \det(\varphi_i(\lambda_j)),
$$ where $(\varphi_i(\lambda_j))=(\lambda_j^{i-1}$) is a Vandermonde
matrix, and the normalizing factor ${Z^{\mathrm{GOE}}_{n}}$ is given
by
\begin{equation}
{Z^{\mathrm{GOE}}_{n}} =
n!(2\pi)^{\frac{n}{2}}2^{(n(n-1)/4 + n/2)}\prod_{j=1}^n \frac{\Gamma(\frac{j}{2})}{\Gamma(\frac{1}{2})}.
\end{equation}
See, for example, \cite[(2.5.4)]{AGZ}. 

Note that the probability that the matrix $M$ is indefinite is related
to the probability $p^+_{n}$ that all its eigenvalues are positive by
\begin{equation}
\rho_{n}(\infty) = 1 - \mathbb{P}(\mathrm{positive~definite})
                   - \mathbb{P}(\mathrm{negative~definite}) = 1-2p^+_{n},
\end{equation}
where the second equality follows by symmetry.  Below we will
calculate $p^+_{n}$, and hence obtain the value of $\rho_{n}(\infty)$.

\subsection{de Bruijn's identity}\label{sec:db}

We recall a useful result from \cite[\S4]{debruijn} for
calculating determinantal integrals of the type we will need. As a
generalisation of an expression for the volume of the space of
symmetric unitary matrices, de Bruijn considered integrals of the
form:
\begin{equation}
\Omega =\underset{a\leq x_1\leq \cdots \leq x_n \leq b} {\int\cdots\int}\underset{1\leq i,j\leq n}\det(\varphi_i(x_j)) \D x_1\dots\D x_n . \label{omega}
\end{equation}
Recall that the Pfaffian of a skew-symmetric matrix $A=(a_{ij})$ is
given by
\begin{align}{\rm Pf}(A) =
  \sum_{\tau}\operatorname{sgn}(\tau)a_{i_1,j_1}a_{i_2,j_2}\cdots
  a_{i_s,j_s}\label{pf},\end{align} where $\tau$ ranges over all
partitions
$$\tau =\{(i_1,j_1), (i_2,j_2),\dots
(i_s,j_s)\}$$ of $n=2s$ where $i_k<i_{k+1}$ and $i_k<j_k$. The sign is of the
corresponding permutation $$\tau =\begin{bmatrix} 1 & 2 & 3 & 4 &
\cdots & 2s\\ i_1 & j_1 & i_2 & j_2 & \cdots & j_{s} \end{bmatrix}.$$

The integral (\ref{omega}) may be rewritten as the Pfaffian of either
an $n \times n$ skew-symmetric matrix if $n$ is even, or an $(n+1)
\times (n+1)$ skew-symmetric matrix if $n$ is odd.  More precisely,
let $n':=2 \lceil n/2 \rceil$; then we have $\Omega = \mathrm{Pf}(A)$,
where $A$ is the $n'\times n'$ skew-symmetric matrix whose
$(i,j)$-entry $a_{ij}$ is given for $i < j$ by
\begin{equation}
a_{ij} = \begin{cases} \int_a^b\int_a^b
  \mathrm{sign}(y-x)\varphi_i(x)\varphi_j(y) \D x \D y & \mbox{if}~ i
  < j \leq n; \\[.075in] \int_a^b\varphi_j(x) \D x & \mbox{if}~
  i<j=n+1.\\[.075in]
\end{cases} \label{Adef}
\end{equation}
The second case occurs only when $n$ is odd. Note that this holds for
a general measure $\D x$; below, we will use $\D x =
e^{-\lambda^2/4}\D\lambda$, where $\D \lambda$ is the Lebesgue measure
on $\R$.

The Pfaffian form of the integral is found by expanding the
determinant and using a signature function to keep track of the signs
and the ordering of the $x_i$. This signature function of $n$
variables can be broken up into a sum of products of two-variable
pieces (and a one-variable piece if $n$ is odd) and thus the integral
can be factorised into a sum of products of two (and one) dimensional
integrals which is recognised as of the form (\ref{pf}) for a matrix
with entries (\ref{Adef}).

\subsection{Calculation of $\rho_n^{\infty}$}

For a matrix $M$ from the \GOE, the joint distribution of the
eigenvalues $\lambda_1(M)\leq \lambda_2(M) \leq \cdots \leq
\lambda_n(M) $ is given by
\begin{equation}
 \frac{n!}{Z^{\mathrm{GOE}}_{n}} \mathbf{1}_{\lambda_1\leq \lambda_2\leq \cdots\leq \lambda_n} |\Delta(\lambda)|  \prod_{i=1}^n e^{-\frac{1}{4} \lambda_i^2}\D\lambda_i.
\end{equation}
The ordering in the domain of integration below means that we can
replace $|\Delta(\lambda)|$ by $\Delta(\lambda)$.  It then follows
that $p_{n}^+$ is given by the integral
\begin{align}
 p_{n}^+ &=\frac{n!}{Z^{\mathrm{GOE}}_{n}} \underset{0\leq \lambda_1\leq \cdots \leq \lambda_n \leq \infty} {\int\cdots\int} \Delta(\lambda) \prod_{i=1}^ne^{-\frac{1}{4} \lambda_i^2}\D\lambda_i \nonumber\\
 &=  \frac{n!}{Z^{\mathrm{GOE}}_{n}}\underset{0\leq \lambda_1\leq \cdots \leq \lambda_n \leq \infty} {\int\cdots\int} \det(\varphi_i(\lambda_j)) \prod_{i=1}^ne^{-\frac{1}{4} \lambda_i^2}\D\lambda_i \nonumber \\
 &=\frac{n!}{Z^{\mathrm{GOE}}_{n}} \mathrm{Pf}(A), \label{prob}
 \end{align}
where the last equality follows from the result of
\S\ref{sec:db}. Here, $A=(a_{ij})$, where for $i<j\le n$ 
we define
\begin{align}\label{matrixel}
a_{ij} &= \int_0^{\infty}\int_0^\infty \mathrm{sign}(y-x)x^{i-1}y^{j-1} e^{-\frac{x^2+y^2}{4}}\D x \D y \nonumber\\
&= 2^{i+j-2}\Gamma\left(\frac{i+j}{2}\right)\left(\beta_{\frac{1}{2}}\left(\frac{i}{2},\frac{j}{2}\right)- \beta_{\frac{1}{2}}\left(\frac{j}{2},\frac{i}{2}\right)\right),
\end{align}
and for $n$ odd we also set $a_{i,n+1} = 2^{i-1}\Gamma(\frac{i}{2})$.
Here the gamma and incomplete beta functions are as defined in
\S\ref{sec:beta}.  From the resulting skew-symmetric matrix $A$, we
may evaluate (\ref{prob}) to determine $\rho_{n}(\infty)$, yielding
Theorem~\ref{mainreal}.  Explicit values of $\rho_n(\infty)$ are
displayed in Table~\ref{table:GOE-results} for $n\le8$.

%%Here is Tom's short sketch version, as an alternative to the following
%%more detailed subsection.

\begin{remark} \label{degrmk}\rm
  It is easily shown that the matrix entries $a_{ij}$ in
  Theorem~\ref{mainreal} are of the form $x$ or $x \sqrt{\pi}$ for $x
  \in \Q(\sqrt{2})$, in accordance with whether $i+j$ is even or odd. Let $s = \lceil
  n/2 \rceil$, so that $A$ is a $2s \times 2s$ matrix.  Then after
  re-ordering the rows and columns we have
  \[ {\rm Pf}(A) = \pm {\rm Pf} \begin{pmatrix} A_1 & \sqrt{\pi} A_2
    \\ -\sqrt{\pi} A_2^t & A_3 \end{pmatrix} = \pm \pi^{s/2} {\rm
    Pf} \begin{pmatrix} A_1 & A_2 \\ -A_2^t & \pi^{-1}
    A_3 \end{pmatrix} \] where $A_1, A_2$ and $A_3$ are $s \times s$
  matrices with entries in $\Q(\sqrt{2})$. Since $\prod_{m=1}^n
  \Gamma(m/2) = \pi^{s/2} y$ for some $y \in \Q$, it follows by
  Theorem~\ref{mainreal} and the definition of the Pfaffian that
  $\rho_n(\infty)$ is a polynomial in $\pi^{-1}$ having coefficients in
  $\Q(\sqrt{2})$ and degree at most $\lfloor s/2 \rfloor = \lfloor
  (n+1)/4 \rfloor$.
\end{remark}

\subsection*{Acknowledgments}
We thank Carlos Beltran, Jonathan Hanke, Peter Sarnak, and Terence Tao for helpful
conversations.  The first author (Bhargava) was supported by a Simons
Investigator Grant and NSF grant~DMS-1001828; the second (Cremona) and
fifth (Keating) were supported by EPSRC Programme Grant EP/K034383/1
LMF: L-Functions and Modular Forms; the fifth (Keating) was also supported by a grant from The Leverhulme Trust, a Royal Society Wolfson Merit Award, a Royal Society Leverhulme Senior Research Fellowship, and by the Air Force Office of Scientific Research, Air Force Material Command, USAF, under grant number FA8655-10-1-3088.


\begin{thebibliography}{10}

\bibitem{AGZ}
G.\ W.\ Anderson, A.\ Guionnet, and O.\ Zeitouni,
{\it An Introduction to Random Matrices}, 
Cambridge Studies in Advanced Mathematics {\bf 118},
Cambridge University Press, 2009.

\bibitem{DM}
D.\ Dean and S.\ Majumdar,
Extreme Value Statistics of Eigenvalues of Gaussian Random Matrices, 
{\it Phys. Rev. E} {\bf 77}, 041108 (2008) ({\tt http://arxiv.org/abs/0801.1730}).

\bibitem{debruijn}
N.~G. de~{Bruijn},
\newblock {On some multiple integrals involving determinants},
\newblock {\em J. Indian Math. Soc.} {\bf 19}, 133--151 (1955).

\bibitem{DM2}
J-P.\ Dedieu and G.\ Malajovich, On the number of minima of a random polynomial, {\it J. Complexity} {\bf 24}, 89--108 (2008) ({\tt http://arxiv:math/0702360}).

\bibitem{Ek}
T.\ Ekedahl, An infinite version of the Chinese remainder theorem,
{\it Comment.\ Math.\ Univ.\ St.\ Paul.} {\bf 40}, 53--59 (1991).

\bibitem{PV}
B.\ Poonen and P.\ Voloch, Random Diophantine equations, 
{\it Arithmetic of Higher-Dimensional Algebraic Varieties},  
{\it Progress in Mathematics} {\bf 226}, 175--184, Birkh\"auser, Boston, MA, 2004.

%\bibitem{AS}
%M.\ Abramowitz and I.~A.\ Stegun 
%{\it Handbook of mathematical functions: with formulas, graphs, and mathematical tables}, 1972.



\end{thebibliography}
\end{document}